\documentclass{amsart}
\usepackage{hyperref}
\usepackage{amssymb,amsmath}
\usepackage{enumerate}
\usepackage{color}
\RequirePackage{fix-cm}
%
%

\setlength{\textheight}{7.9 in} \setlength{\textwidth}{5.2 in}

\usepackage{graphicx}
\usepackage[cmtip,all]{xy}

\usepackage[draft]{todonotes}
\usepackage[cmtip,all]{xy}

\usepackage{hyperref}
\usepackage[utf8]{inputenc}
\usepackage{amsmath,amsfonts,amssymb,euscript,graphicx,color,tikz}


\newtheorem{thm}{Theorem}[section]
\newtheorem{cor}[thm]{Corollary}
\newtheorem{lem}[thm]{Lemma}
\newtheorem{pro}[thm]{Proposition}

\theoremstyle{definition}
\newtheorem{DEF}[thm]{Definition}
\newtheorem{rem}[thm]{Remark}

\newtheorem{cons}[thm]{Consequence}


\numberwithin{equation}{section}

\def\andd{\quad\hbox{and}\quad}

\def\v{{\mathcal V}}

\def\vd{\dot{\mathcal V}}

\def\vt{\tilde{\mathcal V}}

\def\fm{(\cdot,\cdot)}
\def\a{\alpha}

\def\w{{\mathcal W}}

\def\sub{\subseteq}
\def\rd{\dot{R}}

\def\rt{\tilde{R}}

\def\lam{\lambda}
\def\Lam{\Lambda}

\def\1k{\frac{1}{k}}

\def\la{\langle}
\def\ra{\rangle}
\def\rds{\dot{R}_{sh}}
\def\rdl{\dot{R}_{lg}}

\def\GL{GL}

\def\d{\delta}

\def\b{\beta}

\def\qed{\hfill$\Box$}

\def\sg{\sigma}

\def\bd{\dot{\b}}

\def\hh{{\mathcal H}}

\def\sg{\sigma}

\usepackage{verbatim} 

\def\ad{\hbox{ad}}

\def\bbbc{{\mathbb C}}
\def\bbbz{{\mathbb Z}}

\def\bbbr{{\mathbb R}}

\def\bbbn{{\mathbb N}}

\def\ep{\epsilon}

\def\ll{{\mathcal G }}

\def\ll{\mathcal L}

\def\proof{{\noindent\bf Proof. }}

\def\rds{\dot{R}_{sh}}
\def\rdl{\dot{R}_{lg}}
\def\rde{\dot{R}_{ex}}
\def\St{\mathfrak{St}}

\def\G{\mathfrak{G}}
\def\SL{\text{SL}_{2}}

\begin{document}

%
%

\title{Groups of extended affine Lie type}

\author{Saeid Azam, Amir Farahmand Parsa}
\address
{Department of Mathematics\\ University of Isfahan\\Isfahan, Iran,
P.O.Box: 81745-163 and\\
School of Mathematics, Institute for
Research in Fundamental Sciences (IPM), P.O. Box: 19395-5746.
} \email{azam@ipm.ir}
\address{Institute for
Research in Fundamental Sciences (IPM), P.O. Box: 19395-5746.}\email{a.parsa@ipm.ir}

 \thanks{This research was in part supported by
		a grant from IPM and carried out in
		IPM-Isfahan Branch.}
\keywords{{\em Extended affine, Steinberg, Kac--Moody}\/:}


\begin{abstract}
We construct certain Steinberg groups associated to extended affine Lie algebras
and their root systems.\ Then by the integration methods of Kac and Peterson
for integrable Lie algebras, we associate a group to every tame extended
affine Lie algebra.\ Afterwards, we show that the extended affine Weyl group
of the ground Lie algebra can be recovered as a quotient group of two subgroups
of the group associated to the underlying algebra similar to Kac--Moody groups.
\end{abstract}
 \subjclass[2010]{17B67, 17B65, 19C99, 20G44, 22E65}
\maketitle

\section{Introduction}
\label{intro}
In 1985 K. Saito introduced {\it extended affine root systems} (EARS) in order to
present a suitable geometric space for modeling the universal deformation of a simple elliptic singularity (see \cite{Sai85}).\ A few years later, two mathematical physicists, H\o egh-Krohn and  Toresani, constructed certain Lie algebras whose root systems resembled some of 
the extended affine root systems (see \cite{HoTor90}).\ These algebras were first called \textit{quasi-simple Lie algebras} but, because of their root systems, later they were called \textit{{extended affine Lie algebras}} (EALA).\ These algebras and their root systems are thoroughly studied in \cite{AABGP97}.\

Although there is {a rather rich literature} on EALAs, their associated Weyl groups and EARSs, there is little {about their groups (see \cite{All02,AG01,Az999,Az99, Az00,Kry95,Pian02}).}\ Concerning the construction of groups associated to EALAs and EARSs we can notably mention two works: \cite{Kry95} and \cite{Mor06}.\ For affine Kac--Moody algebras, and in general, all Kac--Moody algebras
there are various methods to associated a group structure to these algebras.\

In \cite{Kac85} Kac gives a recipe for a group construction associated to \textit{integrable} Lie algebras by means of representation of Lie algebras.\ Later on, Tits in \cite{Tits87} associates a group construction by generators and relations {to} Kac--Moody root systems similar to the methods used for finite root systems introduced by Chevalley and Demazure (see \cite{Che55,Che95,De65}).\  

Considering EALAs as a certain generalization of affine Kac--Moody
algebras,  by following \cite{Tits87}, we define certain groups associated to EALAs and EARSs by generators and relations.\ These groups are called \textit{Steinberg}.\ For the simply-laced cases this was done in \cite{Kry95} and we generalize the results therein for arbitrary {EALAs}.\ 

By definition, EALAs are integrable in the sense of \cite{Kac85}, hence we use
Kac and Peterson methods to produce ``integration'' groups for such EALAs.\ Furthermore, we find an epimorphism from Steinberg groups to these ``integration'' groups associated to a fixed EALA.\
This epimorphism together with the generalized presentation by conjugations of the extended affine Weyl groups (see \cite{Az00}) enables us to show that extended affine Weyl groups can be recovered as a quotient group of two subgroups of the ``integration'' groups.\ This generalizes similar results in \cite{Kry95} from simply laced to arbitrary types.

\section{General setup}\label{general-setup}

Let $\bbbc$ be the {field} of complex numbers.\ All vector spaces and algebras are assumed to be over $\bbbc$ unless
otherwise stated.\ By $\la T\ra$, we mean the subgroup generated by a subset $T$ of the ground group or vector space.\ For a vector space $\v$ over $\bbbr$ equipped with a symmetric form $\fm$, we set $\v^0$ to be the 
radical of the form, also  for $R\sub\v$, we set $R^0=R\cap\v^0$ and $R^\times=R\setminus R^0$.
For $\a\in \v$ with $(\a,\a)\neq 0$, we set $\a^\vee:=2\a/(\a,\a)$, {and define the reflection $w_\a\in\GL(\v)$ by $w_\a(\b)=\b-(\beta,\alpha^\vee)\a$. Sometimes we also use the notation $\la\a,\b\ra:=(\a,\b^\vee).$}  

Let $(\ll,\fm,\hh)$ be a tame irreducible extended affine Lie algebra (EALA). This means that $\ll$ is a Lie algebra, $\hh$ is a non-trivial
subalgebra of $\ll$ and $\fm$ is a symmetric bilinear form on $\ll$ satisfying (A1)-(A5) below.

(A1) $\fm$ is invariant and non-degenerate,

(A2) $\hh$ is a finite-dimensional Cartan subalgebra of $\ll$.

Axiom (A2) means that $\ll=\sum_{\a\in\hh^\star}\ll_\a$ with $\ll_\a=\{x\in\ll\mid [h,x]=\a(h)x\hbox{ for all }h\in\hh\},$
and $\ll_0=\hh$. Let $R$ be the set of roots of $\ll$, namely $R=\{\a\in\hh^\star\mid\ll_\a\neq\{0\}\}$.
It follows from (A1)-(A2) that the form $\fm$ restricted to $\hh$ is non-degenerate and so it can be transferred
to $\hh^\star$ by $(\a,\b):=(t_\a,t_\b)$ where $t_\a\in\hh$
is the unique element satisfying $\a(h)=(h,t_\a)$, $h\in\hh$.
Let $R^0=\{\a\in R\mid (\a,\a)=0\}$ and $R^\times=R\setminus R^0$.
Then $R=R^0\uplus R^\times$ is regarded as the decomposition of roots into
{\it isotropic} and {\it non-isotropic} roots, respectively. Let $\v:=\hbox{Span}_\bbbr R$
and $\v^0:=\hbox{Span}_{\bbbr}R^0$.

(A3) For $\a\in R^\times$ and $x\in\ll_\a$, $\ad(x)$ acts locally nilpotently on $\ll$.

(A4) The $\bbbz$-span of $R$ in $\hh^\star$ is a free abelian group of rank $\dim\v$.

(A5) (a) $R^\times$ is indecomposable,

(b) $R^0$ is {\it non-isolated}, meaning that $R^0=(R^\times-R^\times)\cap\v^0.$

The {\it core} of an EALA $\ll$ is by definition, the subalgebra $\ll_c$ of $\ll$ generated by non-isotropic root spaces.\
It follows that $\ll_c$ is a perfect ideal of $\ll$.
When $\ll_c^\perp:=\{x\in\ll\mid (x,\ll_c)=\{0\}\}$ is contained in the core,  $\ll$ is called {\it tame}.

Let $(\ll,\fm,\hh)$ be an EALA.\
Let $\w_\ll$ be the Weyl group of $\ll$; the subgroup of $\GL(\hh^\star)$ generated by reflections $w_\a$, $\a\in R^\times$.\ By \cite[Theorem I.2.16]{AABGP97}, the root system $R$ of $\ll$  is an irreducible extended affine root system in the following sense.

\begin{DEF}\label{def2}\rm{
		Let $\v$ be a finite-dimensional real vector space equipped with a non-trivial positive semidefinite symmetric bilinear form $\fm$, and $R$ a subset of $\v$. Triple $(\v,\fm,R)$, or $R$ if there is no confusion, is called an {\it extended affine root system} (EARS)
		{if the following axioms hold:}}
	{\begin{itemize}
			\item[(R1)] $\la R\ra$ is a full lattice in $\v$,
			\item[(R2)] $(\b,\a^\vee)\in\bbbz$, $\a,\b\in R^\times$,
			\item[(R3)] $ w_\a(\b)\in R$ for $\a\in R^\times$, $\b\in R$,			
			\item[(R4)] $R^0= \v^0\cap(R^\times - R^\times)$,
			\item [(R5)] $\a\in R^\times\Rightarrow 2\a\not\in R$.
		\end{itemize}
We say that $R$ is {\it irreducible} or
		{\it connected} if  $R^\times$ cannot be written as the union of two of its non-empty orthogonal subsets.}
\end{DEF}		
\begin{rem}\label{rem100}{\rm
		One checks that the definition of an irreducible EARS given above coincides with the definition of an extended affine root system  given 
		in \cite[Definition II.2.1]{AABGP97}.}
\end{rem}

Let $(\v,\fm,R)$ be an {irreducible} EARS.\ It follows that the canonical image $\bar R$ of $R$ in $\bar\v:=\v/\v^0$ is a finite root system
in $\bar\v$. The {\it type} and the {\it rank} of $R$ is defined to be the type and the rank of $\bar R$, {and the dimension of $\v^0$ is called the {\it nullity} of $R$.}
{$R$ is called {\it reduced} if $\bar R$ is reduced.}
Taking an appropriate pre-image $\dot R$ of $\bar R$ in $\v$, under the canonical map $\v\longrightarrow\bar\v$,
one can see that $\rd$ is a finite root system in $\vd:=\hbox{Span}_\bbbr\dot R$ isomorphic to $\bar R$.\ Moreover, one obtains a
description of $R$ in the form
\begin{equation}\label{eq99}
R=R(\rd,S,L,E):=(S+S)\cup (\rds+S)\cup (\rdl+L)\cup (\rde+E),
\end{equation}
where $\rds$, $\rdl$ and $\rde$ are the sets of short, long and extra long roots of $\rd$, respectively, and $S$, $L$ and $E$ are
certain subsets of $R^0$, called {\it (translated) semilattices}, which interact in a prescribed way (see \cite[Chapter II]{AABGP97} for details).\ {If $R$ is reduced, then in (\ref{eq99}), $\rd_{ex}$ and $E$ are interpreted as empty sets, so
	\begin{equation}\label{eq99-1}
	R=R(\rd,S,L)=(S+S)\cup (\rds+S)\cup (\rdl+L).
	\end{equation}
	for some  reduced finite root system $\rd$}.\
For $\a\in R^\times$, we denote by $\dot\a$ the unique element in $\rd$ associated to $\a$ via the description (\ref{eq99}), namely $\a=\dot\a+\d_\a$ for $\dot\a\in\vd$ and $\d_\a$ in $S$, $L$ or $E$.

We have $\v=\vd\oplus\v^0.$ Set $\vt:=\v\oplus (\v^0)^\star$. We extend the form on $\v$ to a non-degenerate form on $\vt$ by the natural dual pairing
of $\v^0$ and $(\v^0)^\star$, namely $(\lam,\sg):=\lam(\sg)$ for $\lam\in (\v^0)^\star$ and $\sg\in\v$.
Let $\w$ be the subgroup of $\GL(\vt)$ generated by reflections $w_\a$, $\a\in R^\times$. {If $R$ is the root system of an 
	extended affine Lie algebra,} then
$\w\cong\w_\ll$. We refer to $\w$ as the {(extended affine)} Weyl group of $R$.
{Extended affine Weyl groups are not in general Coxeter groups {\cite[Theorem 1.1]{Hof07}}.\
	However, they enjoy a related presentation called ``generalized presentation by conjugation"  which will be used in the sequel. We recall briefly this presentation here.}
Assume that $R$ is reduced and that
$\theta_s$ and $\theta_\ell$ are the highest short and highest long roots of $\rd$. One knows that $S$ contains a $\bbbz$-basis $\{\sg_1,\ldots,\sg_n\}$ of the lattice $\Lam:=\la R^0\ra=\la S\ra$ (see \cite[Proposition II.1.11]{AABGP97}), {and that if $\rdl\neq\emptyset$ then $|\Lam/\la L\ra|=k^t$, where $k\in \{2,3\}$ (see \cite[Chapter II.4]{AABGP97}). The integer $t$ is called the {\it twist number} of $R$.} 

For $\a\in R^\times$ and $\sg={\sum^{n}_{i=1}m_i\sg_i\in R^0}$ with
$\a+\sg\in R$, we set
\begin{equation}\label{b4}
c_{(\a,\sg)}:=
(w_{\a+\sg}w_{\a}) (w_{\a}w_{\a+\sg_1})^{m_1}(w_{\a}w_{\a+\sg_2})^{m_2}
\dots (w_{\a}{w_{\a+\sg_n})^{m_n}.}
\end{equation}
Consider a pair
\begin{eqnarray*}
	(\a_p,\eta_p = \sum^n_{i=1} m_{ip}\sg_i) \in (\{\theta_s\} \times
	\sum^n_{i=1} {\bbbz}
	\sg_i) \bigcup (\{\theta_\ell\} \times \sum^n_{i=t+1} {\bbbz}\sg_i),
\end{eqnarray*}
and let $\ep_p\in \{\pm 1\}$. The triple $\{(\ep_p,\a_p,\eta_p)\}^m_{p=1}$ is called a
{\it reduced collection} if
$$
\sum^m_{p=1} k(\a_p) \ep_p m_{ip} m_{jp} =  0 \quad \hbox{for all~} 1\leq
i<j\leq n,
$$
where $k(\theta_\ell)=1$ {for all types,} and $k(\theta_s)=3$ for type $G_2$ and $k(\theta_s)=2$ for the remaining types.
Here we have used the convention that if the type of $R$ is simply laced, then $\theta:=\theta_s=\theta_\ell$ and
$k(\theta)=1$.\ We note from  \cite[\S2]{Az00} that if $\{(\ep_p,\a_p,\eta_p)\}^m_{p=1}$ is a
reduced collection then
$c_{(\a_p,\eta_p)}$ is an element of $\w$. We record here the following fact which will be used in the sequel.
\begin{thm}\label{azam2000}\cite[Theorem 3.7]{Az00}
	The Weyl group $\w$ is isomorphic to the group $\hat\w$
	defined by generators $\hat{r}_\a,$ $\a\in R^\times$ and relations:
	\begin{itemize}
		\item[(i)] $\hat{r}^2=1;$ $\a\in R^\times$,
		\item[(ii)] $\hat{r}_\a\hat{r}_\b\hat{r}_\a=\hat{r}_{r_\a(\beta)};$ $\a,\b\in R^\times,$
		\item[(iii)] $\prod_{p=1}^m\hat{c}_{(\a_p,\eta_p)}= 1$ for any reduced collection $\{(\ep_p, \a_p, \eta_p)\}_{p=1}^m$,
	\end{itemize}
	where $\hat{c}_{(\a_p,\eta_p)}$ is the element in $\hat\w$ corresponding to $c_{(\a_p,\eta_p)}$, {under the assignment
		$w_\a\mapsto\hat{w}_\a$}.
\end{thm}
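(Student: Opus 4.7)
The plan is to define the evident map $\pi\colon \wht\to\w$ by $\hat{r}_\a\mapsto w_\a$ for $\a\in R^\times$, verify that the three families of relations (i)--(iii) hold in $\w$ so that $\pi$ is a well-defined surjective homomorphism, and then prove injectivity by matching $\wht$ with the semidirect-product structure already carried by $\w$.

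Checking the relations in $\w$: (i) and (ii) are classical identities for reflections relative to a (possibly degenerate) symmetric bilinear form, and are immediate. Relation (iii) is the substantive one. The key ingredient is that for $\a\in R^\times$ and $\sg\in\v^0$ with $\a+\sg\in R$, the composition $w_{\a+\sg}w_\a$ has an explicit ``translation--like'' action on $\vt$; consequently each element $c_{(\a,\sg)}$ defined by (\ref{b4}) can be described by a translation vector that is bilinear in the coefficients $m_i$ of $\sg=\sum m_i\sg_i$. The defining vanishing condition of a reduced collection is then exactly what is needed to conclude that the product $\prod_p c_{(\a_p,\eta_p)}^{\ep_p}$ equals $1$ in $\w$. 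Hence $\pi$ is a well-defined surjection.

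To show injectivity I would mirror inside $\wht$ the decomposition $\w\cong \overline{\w}\ltimes T$, where $\overline{\w}$ is the finite Weyl group of $\rd$ and $T$ is the translation lattice generated by the elements $c_{(\a,\sg)}$. Let $\hat{W}_0:=\la \hat{r}_{\dot\a}\mid\dot\a\in\rd^\times\ra$ and $\hat{T}:=\la \hat{c}_{(\a,\sg)}\mid \a\in R^\times,\ \sg\in R^0,\ \a+\sg\in R\ra$. Since a finite Weyl group is already presented by relations of types (i) and (ii) restricted to its own roots (the classical ``presentation by conjugation'' for finite Weyl groups), the restriction $\pi|_{\hat{W}_0}\colon\hat{W}_0\to\overline{\w}$ is an isomorphism. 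Relation (ii) shows that $\hat{W}_0$ normalises $\hat{T}$, and a direct manipulation using (\ref{b4}) together with (i) and (ii) rewrites each generator $\hat{r}_\a=\hat{r}_{\dot\a+\d_\a}$ as a product of an element of $\hat{T}$ and an element of $\hat{W}_0$. Therefore $\wht = \hat{T}\cdot\hat{W}_0$.

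The main obstacle is to prove that $\pi|_{\hat{T}}\colon\hat{T}\to T$ is an isomorphism; once this is in hand, $\hat{T}\cap\hat{W}_0=\{1\}$ follows by applying $\pi$, the decomposition $\wht = \hat{T}\cdot\hat{W}_0$ upgrades to a semidirect product matching $\w = T\rtimes\overline{\w}$, and $\pi$ is forced to be an isomorphism. This is precisely where (iii) is needed. I would show, using (i) and (ii) only, that any commutator $[\hat{c}_{(\a,\sg)},\hat{c}_{(\b,\tau)}]$ in $\wht$ can be rewritten as a product $\prod_p\hat{c}_{(\a_p,\eta_p)}^{\ep_p}$ indexed by a suitable reduced collection, whence by (iii) it is trivial; so $\hat{T}$ is abelian. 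Then $\hat{T}$ is a quotient of the free abelian group on the symbols $\hat{c}_{(\a,\sg)}$ modulo the linear relations read off from (iii), and this is by construction the very set of relations cutting out $T$ inside $\GL(\vt)$; so $\pi|_{\hat{T}}$ is injective, completing the argument.
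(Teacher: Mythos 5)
First, a remark on the comparison itself: the paper does not prove this theorem at all; it is quoted verbatim from \cite[Theorem 3.7]{Az00} and used as a black box. So the only meaningful question is whether your proposed argument would actually establish the result, and there it has a genuine flaw. The surjectivity half (relations (i)--(iii) hold in $\w$, since each $c_{(\a,\sg)}$ acts as a central ``transvection'' whose data is bilinear in the coefficients $m_i$, and the reduced-collection condition kills the product) is fine in spirit. The problem is the injectivity half, which rests on the claimed decomposition $\w\cong\overline{\w}\ltimes T$ with $T:=\la c_{(\a,\sg)}\ra$, together with the companion claim $\wht=\hat{T}\cdot\hat{W}_0$. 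This structure theorem is false. The elements $c_{(\a,\sg)}$ are not the translations of $\w$; they are the \emph{central correction} elements. For instance $c_{(\a,\sg_i)}=(w_{\a+\sg_i}w_\a)(w_\a w_{\a+\sg_i})=1$ for each basis element $\sg_i$, and in nullity $1$ every $c_{(\a,\sg)}$ is trivial, while $\w$ is an infinite affine Weyl group; so $\overline{\w}\ltimes\la c_{(\a,\sg)}\ra$ is finite and cannot be $\w$. More generally every $c_{(\a,\sg)}$ acts trivially on $\v$ (it only moves $(\v^0)^\star$), so every element of $\la c_{(\a,\sg)}\ra\cdot\overline{\w}$ preserves $\vd$, whereas $w_{\dot\a+\d_\a}$ with $\d_\a\neq 0$ sends $\vd$ out of itself. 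Applying $\pi$ shows that $\hat r_{\dot\a+\d_\a}$ cannot be written as an element of $\hat{T}$ times an element of $\hat{W}_0$, so $\wht\neq\hat{T}\hat{W}_0$, and the whole injectivity scheme breaks at its first step.

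The deeper point is that the genuine translation part of $\w$ is the subgroup generated by \emph{all} products $w_{\a+\sg}w_\a$, and for nullity $\geq 2$ this is not a lattice but a two-step nilpotent (Heisenberg-like) group: one checks directly on $\vt$ that $w_{\a+\sg_1}w_\a$ and $w_{\a+\sg_2}w_\a$ do not commute, their commutator being a nontrivial central transvection of $(\v^0)^\star$ into $\v^0$. The elements $c_{(\a,\sg)}$ generate only (part of) the center of this nilpotent group, so your final step --- presenting $\hat{T}$ as a free abelian group modulo the linear relations read off from (iii) and matching it with a translation lattice inside $\GL(\vt)$ --- has no counterpart in $\w$. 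This is precisely why the proof in \cite{Az00} (building on the structure results for extended affine Weyl groups in \cite{Az999,Az99}) has to work with the decomposition $\w\cong\dot{\w}\ltimes\hh$, with $\hh$ nilpotent, and to use the relations (iii) to control exactly the central kernel of the surjection $\wht\to\w$; that analysis is substantially more delicate than the semidirect-product-with-a-lattice argument you propose. To repair your proof you would need to replace $\hat{T}$ by the subgroup of $\wht$ generated by all $\hat r_{\a+\sg}\hat r_\a$, establish its two-step nilpotent structure from (i)--(ii), and only then invoke (iii) to identify its center with that of $\hh$.
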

The presentation given in Theorem \ref{azam2000} is called the {\it generalized presentation by conjugation} for $\w$. If $\w$ is  
isomorphic to the presented group determined by (i)-(ii) of Theorem \ref{azam2000}, 
then $\w$ is said to have the {\it presentation by conjugation}.\ Given an EARS $R$, there is a computational procedure to decide
whether its Weyl group has presentation by conjugation or not, see {\cite{AS11}}. In particular, it is known that all EARS of nullity $\leq 2$ have presentation by conjugation.

\section{Nilpotent pairs}\label{nilpotent-pairs}	
For the rest of this work, we assume that $(\ll,\fm,\hh)$ is a tame extended affine Lie algebra and $(\v,\fm,R)$ is an irreducible reduced extended affine root system.\ Whenever, we work with $\ll$, we assume $R$ is its root system.\ We proceed with the same notation as in Section \ref{general-setup}.
\begin{DEF}\label{newdef2}{\rm
		We say a subset $T$ of $R$ is a {\it subsystem} of $R$ if $T$ is an EARS in $\v_T:=\hbox{span}_\bbbr T$ with respect to the form induced 
		from $\v$ on $\v_T$. In particular, we say $T$ is a finite or an affine subsystem of $R$, if $T$ is a finite or an affine root system on its own.} 
\end{DEF}

\begin{lem}\label{lem99} {Let $T$ be an irreducible finite subsystem of $R$.}\ Then the subalgebra
	$\ll_T$ of $\ll$ generated by $\ll_{\pm\a}$, $\a\in T\setminus\{0\}$, is a finite-dimensional simple Lie algebra of type $T$.\ Moreover,
	$$\ll_T=\sum_{\a\in T\setminus\{0\}}\ll_{\a}\oplus\sum_{\a\in T}[\ll_\a,\ll_{-\a}].$$
\end{lem}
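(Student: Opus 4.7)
The plan is to identify $\ll_T$ with the simple finite-dimensional Lie algebra $\g_T$ of type $T$ by means of the Chevalley--Serre presentation. Fix a base $\Pi=\{\a_1,\ldots,\a_\ell\}$ of $T$. Since $\fm$ is invariant and non-degenerate, and $[\ll_{\a},\ll_{-\a}]\sub\hh$ for every $\a\in R$, the form pairs $\ll_{\a_i}$ with $\ll_{-\a_i}$ non-degenerately. I can therefore choose $e_i\in\ll_{\a_i}$ and $f_i\in\ll_{-\a_i}$ with $(e_i,f_i)=2/(\a_i,\a_i)$, which yields $[e_i,f_i]=h_i:=t_{\a_i^\vee}$ and an $\mathfrak{sl}_2$-triple $(e_i,f_i,h_i)$. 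By axiom (A3), both $\ad(e_i)$ and $\ad(f_i)$ act locally nilpotently on $\ll$.

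Next I would verify the Chevalley--Serre relations for the Cartan matrix $A=(a_{ij})$ of $T$, $a_{ij}:=\la\a_i,\a_j^\vee\ra$. The Cartan-level relations $[h_i,h_j]=0$, $[h_i,e_j]=a_{ji}e_j$, $[h_i,f_j]=-a_{ji}f_j$, $[e_i,f_i]=h_i$ are immediate from the construction. For $i\neq j$, the bracket $[e_i,f_j]$ lies in $\ll_{\a_i-\a_j}$; since $\Pi$ is a base of $T$ one has $\a_i-\a_j\notin T$, and the closedness of $T$ inside $R$ (automatic for the subsystems of interest, e.g.\ $T=\rd$) further yields $\a_i-\a_j\notin R$, hence $[e_i,f_j]=0$; symmetrically $[f_i,e_j]=0$. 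The ad-nilpotency $(\ad e_i)^{1-a_{ji}}e_j=0$ then follows: $e_j$ is $\ad(f_i)$-annihilated, has $h_i$-weight $a_{ji}\leq 0$, and $\ad(e_i)$ is locally nilpotent, so the $\mathfrak{sl}_2$-submodule of $\ll$ generated by $e_j$ is the irreducible module of highest weight $-a_{ji}$.

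Serre's theorem now produces a Lie algebra epimorphism $\vp:\g_T\twoheadrightarrow\mathfrak{k}$ onto the subalgebra $\mathfrak{k}\sub\ll$ generated by $\{e_i,f_i\}_{i=1}^\ell$; since $\g_T$ is simple and $\vp\neq 0$, $\vp$ is an isomorphism. Consequently $\mathfrak{k}$ is finite-dimensional simple of type $T$, with root decomposition $\mathfrak{k}=\mathfrak{h}_T\oplus\bigoplus_{\a\in T\setminus\{0\}}\mathfrak{k}_\a$, each $\mathfrak{k}_\a$ one-dimensional and contained in $\ll_\a$, and $\mathfrak{h}_T=\sum_{\a\in T\setminus\{0\}}[\mathfrak{k}_\a,\mathfrak{k}_{-\a}]$. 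Invoking the classical fact that $\dim\ll_\a=1$ for $\a\in R^\times$ in a tame EALA (cf.\ \cite{AABGP97}) gives $\mathfrak{k}_\a=\ll_\a$, so $\ll_{\pm\a}\sub\mathfrak{k}$ for every $\a\in T\setminus\{0\}$ and therefore $\ll_T=\mathfrak{k}$. The displayed direct-sum decomposition is then exactly the root decomposition of $\mathfrak{k}$.

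The main obstacle is the precise verification of the Chevalley--Serre relations: the vanishing $[e_i,f_j]=0$ for $i\neq j$ depends on how $\Pi$ sits inside the ambient EARS $R$ (controlled by closedness of the subsystem), and the ad-nilpotency $(\ad e_i)^{1-a_{ji}}e_j=0$ requires extracting the exact degree of nilpotency from the $\mathfrak{sl}_2$-module structure rather than from the local nilpotence of (A3) alone.
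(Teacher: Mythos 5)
Your overall strategy---generate $\mathfrak{k}=\la e_i,f_i\ra$ from an $\mathfrak{sl}_2$-triple for each simple root of $T$, verify the Serre relations using (A3), identify $\mathfrak{k}$ with the simple algebra of type $T$, and then use one-dimensionality of non-isotropic root spaces to get $\ll_T=\mathfrak{k}$---is coherent, but it breaks down at exactly the step you flag: $[e_i,f_j]=0$ for $i\neq j$, i.e.\ $\a_i-\a_j\notin R$. Your claim that closedness of $T$ in $R$ is ``automatic for the subsystems of interest'' is not justified, and it is false for the lemma as stated: Definition~\ref{newdef2} only requires $T$ to be an EARS in its own span, not to be closed in $R$. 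Concretely, if $R$ has type $G_2$ (a reduced type, hence in scope) and $\a$ (short), $\b$ (long) are simple roots of a finite $G_2$-subsystem, then the short roots $\pm\a,\pm(\a+\b),\pm(2\a+\b)$ form an irreducible finite subsystem of type $A_2$ with base $\{\a,\a+\b\}$, yet $\a-(\a+\b)=-\b\in R$; here $[e_1,f_2]\in\ll_{-\b}$ is non-zero (inside the simple subalgebra attached to the full $G_2$-subsystem, $[\ll_\a,\ll_{-\a-\b}]=\ll_{-\b}$), so the Serre relations fail, and indeed the subalgebra generated by these root spaces is the whole $G_2$-algebra rather than one of type $A_2$. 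So the gap is genuine: closedness must either be added as a hypothesis or verified for the subsystems actually used later, namely $T=R_{\a,\b}=R\cap(\bbbr\a\oplus\bbbr\b)$, which \emph{is} closed in $R$ because it is the full intersection of $R$ with a subspace. (The paper's own proof tacitly relies on the same closedness when it asserts that the roots occurring in $\ll_T$ lie in root strings of $T$ and that $T$ is closed under them.)

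Beyond that, your route genuinely differs from the paper's. The paper quotes \cite[Theorem 1.29]{AABGP97} to get at once that $M_T=\sum_{\a\in T\setminus\{0\}}\ll_\a\oplus\sum_{\a\in T}[\ll_\a,\ll_{-\a}]$ is finite-dimensional simple of type $T$, and then only needs the short root-string argument to show $\ll_T=M_T$; you are in effect reproving that cited theorem from scratch via (A3), $\mathfrak{sl}_2$-theory and Serre's presentation, while still importing the one-dimensionality of non-isotropic root spaces, which belongs to the same AABGP machinery. With a closedness hypothesis supplied (or with $T$ taken of the form $R\cap\v_T$), your argument would be an essentially complete and more self-contained alternative; as written, however, the key verification fails for subsystems that the lemma's hypotheses admit.
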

\proof It follows from \cite[Theorem 1.29]{AABGP97} that $$M_T:=\sum_{\a\in S\setminus\{0\}}\ll_\a\oplus\sum_{\a\in T}[\ll_\a,\ll_{-\a}]$$ is a finite-dimensional simple Lie algebra of type $T$. Since the roots appearing in $\ll_T$ belong to the root strings of roots in $T$, and $T$ as a finite root system is closed under root strings, we conclude that $\ll_T\sub M_T$. Clearly $M_T\sub\ll_T$.\qed

Let $T$ be a subsystem of $R$ and $\lam\in\Lam$. We set
\begin{equation}\label{affine-100}
T_{\lam}:=(T^\times+\bbbz\lam)\cap R\andd R_{T,\lam}:=T_\lam\cup \big( (T_\lam-T_\lam)\cap R^0\big).
\end{equation}
\begin{pro}\label{affine-999} Let $T$ be a finite irreducible subsystem of $R$ and $0\neq\lam\in\Lam$.
	Then $R_{T,\lam}$ is an affine irreducible subsystem of $R$.
\end{pro}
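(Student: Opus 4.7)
The plan is to verify the EARS axioms (R1)--(R5) for $R_{T,\lam}$ in its own real span, then separately check that its nullity is one and that it is irreducible. As a preliminary observation, since $T$ is a finite root system, $\v_T$ is positive definite, so $\v_T\cap\v^0=0$, and thus $V:=\v_T+\bbbr\lam$ has $\bbbr\lam$ as the radical of the induced form; hence it will suffice to prove $\hbox{span}_\bbbr R_{T,\lam}=V$ in order to conclude that the nullity of $R_{T,\lam}$ is one.

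For the closure axiom (R3), I would compute directly. Writing $\a=\dot\a+n\lam$ and $\b=\dot\b+m\lam$ in $T_\lam$ with $\dot\a,\dot\b\in T^\times$, the identities $(\a,\a)=(\dot\a,\dot\a)$ and $(\b,\a^\vee)=(\dot\b,\dot\a^\vee)$ yield
$$
w_\a(\b)=w_{\dot\a}(\dot\b)+\bigl(m-n(\dot\b,\dot\a^\vee)\bigr)\lam,
$$
which lies in $R$ by (R3) for $R$ and whose ``finite part'' $w_{\dot\a}(\dot\b)$ lies in $T^\times$ by the root-system property of $T$; hence $w_\a(\b)\in T_\lam$. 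Isotropic elements of $R_{T,\lam}^0\sub\v^0$ are fixed by every $w_\a$. For (R4) on $R_{T,\lam}$ the task is to show $R_{T,\lam}^0=V^0\cap(T_\lam-T_\lam)$: one inclusion is built into the definition, and the other follows from $T_\lam\sub R^\times$ combined with (R4) for $R$, via $(T_\lam-T_\lam)\cap V^0\sub(R^\times-R^\times)\cap\v^0=R^0$. Axioms (R2) and (R5) transfer from $R$ to $R_{T,\lam}$ immediately, since $R_{T,\lam}^\times=T_\lam\sub R^\times$.

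The critical technical step is producing a non-trivial $\lam$-translate of some element of $T^\times$ inside $R$. I would pick any $\b\in T^\times$ and decompose $\b=\dot\b+\sg_\b$ with $\dot\b\in\rd$ and $\sg_\b\in S$ (if $\dot\b\in\rds$) or $\sg_\b\in L$ (if $\dot\b\in\rdl$). In the short case, the semilattice property $2\Lam\sub S$ gives $\sg_\b+2\lam\in S$, hence $\b+2\lam\in\rds+S\sub R^\times$, placing $\b+2\lam$ in $T_\lam$. The long case is handled analogously via the translated semilattice $L$. Either way, a non-zero multiple of $\lam$ lies in $(T_\lam-T_\lam)\cap R^0=R_{T,\lam}^0$, whence $\lam\in\hbox{span}_\bbbr R_{T,\lam}$ and $V=\hbox{span}_\bbbr R_{T,\lam}$. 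Axiom (R1) then follows, since $\la R_{T,\lam}\ra$ contains $\la T^\times\ra$ (full in $\v_T$ by (R1) for $T$) together with such a multiple of $\lam$, and so is a full lattice in $V$.

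Finally, for irreducibility, any orthogonal splitting $T_\lam=A\sqcup B$ in $V$ would project modulo $V^0=\bbbr\lam$ to an orthogonal splitting of $T^\times$ in $\v_T$, contradicting the irreducibility of $T$. The main obstacle I anticipate is the long-root case in the nullity step: because the sublattice $\la L\ra$ of $\Lam$ may have index $k^t>1$, the quick ``$2\lam$'' trick used for short roots does not apply verbatim, and one has to argue separately in the cases $T^\times\sub\rds+S$, $T^\times\sub\rdl+L$, and the mixed case, possibly leveraging the option to replace $\b$ by a short root of $T$ whenever one is available.
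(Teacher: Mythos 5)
Your route is essentially the paper's: you verify (R1)--(R5) for $R_{T,\lam}$ by the same direct computations (the reflection identity for (R3), axiom (R4) of $R$ for the isotropic part, (R2) and (R5) inherited because $R^\times_{T,\lam}=T_\lam\sub R^\times$), you reduce ``affine'' to ``irreducible EARS whose radical is $\bbbr\lam$'', and you obtain the radical statement by translating a root of $T$ by multiples of $\lam$ inside $R$. One small bookkeeping point: the inference ``irreducible EARS of nullity one $\Rightarrow$ affine root system'' is exactly what the paper invokes \cite[Theorem 2.31]{ABGP97} for, so cite that (or an equivalent fact) rather than leaving it implicit.

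The step you leave open --- the long-root case of the translation argument --- is a genuine obligation, and your proposed fallback is not the right one: replacing $\b$ by a root that is short \emph{in $T$} does not help, since length in $T$ has nothing to do with length in $R$; for example $T$ could consist entirely of roots lying in $\rdl+L$ (say the long roots of a $G_2$-type $R$, which form an $A_2$). The correct fix is one line from the compatibility conditions of the pair $(S,L)$: besides $S+2S\sub S$ one has $L+kS\sub L$ with $k\in\{2,3\}$, so $2\Lam\sub S$ gives $L+2k\Lam\sub L$; hence if $\b=\dot\b+\sg_\b$ with $\sg_\b\in L$, then $\b+2k\bbbz\lam\sub\rdl+L\sub R^\times$, so $2k\lam\in(T_\lam-T_\lam)\cap R^0=R^0_{T,\lam}$, which is all that the span/(R1)/nullity argument needs. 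Note also that a single $\b\in T^\times$ suffices, so the case analysis over $T$ you anticipate (short-only, long-only, mixed) is unnecessary: just treat the chosen $\b$ according to its length in $R$. Your caution is in fact warranted even against the paper's own wording: the paper translates long roots by $2\Lam$ as well, and $L+2\Lam\sub L$ can fail for type $G_2$ with nonzero twist number (there $3\Lam\sub L$ but $2\Lam\not\sub L$, as in the nullity-one system of type $D_4^{(3)}$), so the $k$-adjusted translate above is the safe statement.
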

\proof Let $\v_{\lam}:=\hbox{span}_{\bbbr}R_{T,\lam}$ and $\v^0_\lam:=\v_\lam\cap\v^0.$ Since
$2\Lam\sub S\sub R^0$ we have,  for  $\gamma\in R^\times,$
$$\gamma+2\Lam\sub\rds+S+2\Lam\sub\rds+S\sub R^\times\hbox{ if }\gamma\hbox{ is short,}$$ and
$$\gamma+2\Lam\sub\rdl+L+2\Lam\sub\rdl+L\sub R^\times\hbox{ if }\gamma\hbox{ is long.}$$ Thus
$T^\times+2\bbbz\lam\sub T_\lam$ and so $2\bbbz\lam\sub (T_\lam-T_\lam)\cap R^0\sub\v_\lam^0$.
It follows from this and the fact that $T\cap\v^0=\{0\}$ that $\v^0_\lam=\bbbr\lam$, namely the radical of the form restricted to $R_{T,\lam}$ is one-dimensional.\
Using this and \cite[Theorem 2.31]{ABGP97}, {it suffices to} show that
$R_{T,\lam}$ is {an irreducible} EARS.\ The irreducibility of $R_{T,\lam}$ follows immediately from the irreducibility of $T$, so it is enough to show that {(R1)-(R5)} hold.\
Axioms (R1) and (R2) clearly hold for $R_{T,\lam}$.\ Axiom (R4) holds as $R_{T,\lam}^0=
(T_\lam-T_\lam)\cap R^0$.\ {Since by assumption $R$ is reduced, (A5) holds.} Next,
if $\gamma+n\lam,\gamma'+n'\lam\in R^\times_{T,\lam}$, where $\gamma,\gamma'\in T^\times$,
then
$$w_{\gamma+n\lam}(\gamma'+n'\lam)=w_{\gamma}(\gamma')+n'\lam+(\gamma',\gamma^\vee)n\lam\in R\cap
(T^\times+\bbbz\lam)= R^\times_{T,\lam}.$$
Thus (R3) holds and  the proof is complete.\qed

Let $T$ be an irreducible finite subsystem of $R$ and $0\neq\d\in R^0$.\ By Proposition \ref{affine-999}, $R_{T,\d}$ is an irreducible affine subsystem of $R$.\ By a standard argument (see \cite[Chapter I]{AABGP97}), one can pick an element $\gamma\in\hh^\star$ such that
$(\gamma,\a)=(\gamma,\gamma)=0$ for $\a\in T$ and $(\gamma,\d)\neq 0$. Let $d=t_\gamma$,  the unique element in $\hh$ which represents
$\gamma$ via the form.\ Set
\begin{equation}\label{new12}
\ll_{T,\delta}:=\sum_{0\neq\a\in R_{T,\delta}}\ll_\a+\sum_{\a\in R_{T,\delta}}[\ll_\a,\ll_{-\a}]+\bbbc d.
\end{equation}	

\begin{lem}\label{lemafsubLi}
	Let $T$ be an irreducible finite subsystem of $R$ and $\d\in\Lam$. 
	Then $\ll_{T,\delta}$
is an extended affine Lie algebra of nullity $1$ with root system $R_{T,\delta}.$ Moreover the subalgebra 
$\ll_{T,\delta}^{a}=\la\ll_\a\mid\a\in R^\times_{T,\delta}\ra\oplus\bbbc d$ of $\ll_{T,\delta}$ constitutes an affine Lie algebra with root system $R_{T,\delta}$.  
\end{lem}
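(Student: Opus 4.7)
The plan is to verify that $(\ll_{T,\delta},\,\fm|_{\ll_{T,\delta}},\,\hh_{T,\delta})$ satisfies the EALA axioms (A1)--(A5), is tame, and has nullity one, where
\[
\hh_{T,\delta} := \sum_{\a\in R_{T,\delta}}[\ll_\a,\ll_{-\a}] + \bbbc d;
\]
the ``moreover'' claim then follows from the standard classification of tame EALAs of nullity one as affine Kac--Moody algebras.

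First I would check that $\ll_{T,\delta}$ is a subalgebra of $\ll$. The mixed brackets $[\hh_{T,\delta},\ll_\a]$ and $[d,\ll_\a]=(\gamma,\a)\ll_\a$ land inside $\ll_{T,\delta}$ by construction; for $\a,\b\in R_{T,\delta}$ with $\a+\b\in R$, one uses that $R_{T,\delta}$ is an EARS (Proposition \ref{affine-999}) together with the root--string closure inherited from $R$ to see $\a+\b\in R_{T,\delta}\cup\{0\}$, so $[\ll_\a,\ll_\b]\sub\ll_{T,\delta}$. Next I would establish (A2): $\hh_{T,\delta}$ is abelian (inherited from $\hh$) and finite-dimensional, and the $\hh$-weight decomposition of $\ll$ restricts to $\ll_{T,\delta}=\hh_{T,\delta}\oplus\bigoplus_{0\neq\a\in R_{T,\delta}}\ll_\a$ as an $\hh_{T,\delta}$-weight space decomposition; indeed every non-zero $\a\in R_{T,\delta}$ is separated by $\hh_{T,\delta}$, since $t_\a\in\hh_{T,\delta}$ with $\a(t_\a)=(\a,\a)\neq 0$ for non-isotropic $\a$, and $\a(d)=c(\gamma,\delta)\neq 0$ for isotropic $\a=c\delta$ with $c\neq 0$, so the zero weight space equals $\hh_{T,\delta}$. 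For (A1), invariance is automatic by restriction; non-degeneracy on $\hh_{T,\delta}$ uses that $\fm$ is non-degenerate on $\mathrm{span}\{t_\a:\a\in T\}$ (coming from the finite root system $T$) while $d$ is paired non-trivially with $t_\delta$ via $(d,t_\delta)=(\gamma,\delta)\neq 0$; non-degeneracy on root spaces is inherited from the pairing $\ll_\a\times\ll_{-\a}\to\bbbc$ in $\ll$. Axiom (A3) is immediate by restriction of $\ad$; (A4) holds because $\la R_{T,\delta}\ra$ is a subgroup of the free abelian group $\la R\ra$ of rank $\mathrm{rank}(T)+1=\dim\v_{T,\delta}$; and (A5) was proved in Proposition \ref{affine-999}.

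For tameness, the core is $(\ll_{T,\delta})_c=\la\ll_\a\mid\a\in R_{T,\delta}^\times\ra$, which contains every $t_\a$ with $\a\in R_{T,\delta}^\times$; the only element of $\hh_{T,\delta}$ that could lie outside the core is a multiple of $d$, but since $d$ is not orthogonal to $t_\delta\in(\ll_{T,\delta})_c$, the perpendicular of the core is contained in the core. Nullity one is the content of $\dim\v^0_{T,\delta}=1$ established in the proof of Proposition \ref{affine-999}. Having now secured $\ll_{T,\delta}$ as a tame nullity--$1$ EALA with root system $R_{T,\delta}$, the classification of such algebras (see \cite[Chapter I]{AABGP97}) identifies it with the affine Kac--Moody algebra of type $R_{T,\delta}$ realized as $\mathrm{(core)}\oplus\bbbc d$, which is exactly $\ll_{T,\delta}^a$. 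The main obstacle I expect is the precise bookkeeping for non-degeneracy of $\fm|_{\hh_{T,\delta}}$ and for tameness: both hinge on the fact that $d=t_\gamma$ pairs non-trivially with $t_\delta$, secured by the choice $(\gamma,\delta)\neq 0$, so that the nullity direction is controlled and no element of $\hh_{T,\delta}$ escapes the core's perpendicular.
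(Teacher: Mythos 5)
The first half of your argument (checking (A1)--(A5) for $\ll_{T,\delta}$ with respect to $\hh_{T,\delta}$, and reading off nullity one from Proposition~\ref{affine-999}) is essentially the paper's proof of the first assertion. The gap is in the ``moreover'' part. You claim $\ll_{T,\delta}$ is tame by arguing that the only element of $\hh_{T,\delta}$ that could escape the core is a multiple of $d$. But the perpendicular of $(\ll_{T,\delta})_c$ need not lie in $\hh_{T,\delta}$: by (\ref{new12}) the algebra $\ll_{T,\delta}$ contains the \emph{full} isotropic root spaces $\ll_{n\delta}$ of $\ll$, whereas the homogeneous component of $(\ll_{T,\delta})_c$ in degree $n\delta$ is only $\sum_{\b\in R^\times_{T,\delta}}[\ll_\b,\ll_{n\delta-\b}]$, which is typically much smaller (already for a nullity $\geq 2$ EALA whose core is a large central extension, $\ll_{n\delta}$ contains central vectors that pair trivially with this piece). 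Such a vector is orthogonal to $(\ll_{T,\delta})_c$ without lying in it, and tameness of $\ll$ does not rescue you, because orthogonality to the small core $(\ll_{T,\delta})_c$ is strictly weaker than orthogonality to $\ll_c$. So $\ll_{T,\delta}$ need not be tame --- note the lemma does not claim it is --- and the classification of tame nullity-one EALAs cannot be applied to it. Indeed, if $\ll_{T,\delta}$ were tame of nullity one it would itself be affine and hence equal to $(\ll_{T,\delta})_c+\bbbc d=\ll^a_{T,\delta}$, which fails precisely when the isotropic root spaces carry extra vectors.

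What the ``moreover'' statement requires, and what the paper does, is to work with $\ll^a_{T,\delta}$ directly: (a) show the form restricted to $\ll^a_{T,\delta}$ is non-degenerate --- this is not inherited automatically, since $\ll^a_{T,\delta}$ retains only part of each isotropic root space; the paper deduces it from invariance together with the one-dimensionality of non-isotropic root spaces; (b) conclude that $\ll^a_{T,\delta}$ is itself an EALA of nullity one with root system $R_{T,\delta}$; and (c) prove that $\ll^a_{T,\delta}$ is tame --- here tameness of $\ll$ does transfer, since by the $\hh^\star$-gradedness of the form any $x\in\ll^a_{T,\delta}$ orthogonal to $(\ll^a_{T,\delta})_c$ is orthogonal to every $\ll_\b$, $\b\in R^\times$, hence lies in $\ll_c^\perp\sub\ll_c$, and then in $(\ll^a_{T,\delta})_c$. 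Only after (a)--(c) can one invoke the characterization of affine Kac--Moody algebras as tame EALAs of nullity one (\cite[Theorem~2.31]{ABGP97}). Your proposal skips all three of these steps for $\ll^a_{T,\delta}$, replacing them by a tameness claim for the larger algebra that is not justified and in general not true.
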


\proof 
By \cite{AABGP97}, we have 
$$\ll_{T,\d}=\hh_{T,\d}\oplus\sum_{0\neq\a\in R_{T,\delta}}\ll_\a
$$ where
$\hh_{T,\d}=\sum_{\a\in T}\bbbc t_\a\oplus\bbbc t_\d\oplus\bbbc d.$ From the way  $d$ is
chosen, the form is non-degenerate (and clearly invariant) on both $\ll_{T,\d}$ and $\hh_{T,\d}$.\ In particular, we have
$\hh=\hh_{T,\d}\oplus\hh_{T,\d}^{\perp}$.\ From this it follows that $\ll_{T,\d}$ has a weight space decomposition
with respect to $\hh_{T,\d}$ with the set of roots $R_{T,\d}$ with  $({\ll_{T,\d}})_0=\hh_{T,\d}$.\ Thus the axioms
(A1) and (A2) of an EALA hold for $\ll_{T,\d}$. The remaining axioms of an EALA holds trivially for $\ll_{T,\d}$.
Since $R_{T,d}$ is an affine root system, $\ll_{T,\d}$ has nullity $1$.

Next, we consider the subalgebra $\ll_{T,\delta}^a$. If $\sg\in R^0$ and $\a,\a+\sg\in R^\times$, then from the invariance of the form and one-dimensionality of non-isotropic root spaces it follows that
$([\ll_{\a+\sg},\ll_{-\a}],[\ll_{-\a-\sg},\ll_{\a}])\not=0$. From this, we conclude that the form restricted to $\ll_{T,\delta}^a$ is also non-degenerate.
Now an argument similar to the first statement shows that $\ll^a_{T,\delta}$ is an extended affine Lie algebra of nullity $1$, with root system
$R_{T,\delta}$. Therefore by \cite[Theorem 2.31]{ABGP97}, it remains to show that  $\ll^a_{T,\d}$ is tame.\ For this,
let  $\ll^a_{T,\d,c}$ denote the core of $\ll^a_{T,\delta}$ and $x\in\ll^a_{T\,\delta}$ with
$(x,\ll^a_{T,\delta,c})=\{0\}$.   We must show $x\in\ll^a_{T,\delta,c}$. Since the form on $\ll$ is $\hh^\star$-graded, namely $(\ll_\a,\ll_\b)=\{0\}$ unless $\a+\b=0$, we get
$(x,\ll_\b)=0$ for all $\b\in R^\times,$ implying that  $x\in\ll^\perp_c\sub\ll_c$.
So $x\in\ll_c\cap\ll_{T,\delta}^a$, forcing $x\in\ll^a_{T,\delta,c}$ as required.
\qed

\begin{lem}\label{lemr2srs}
	Let $R$ be a reduced EARS of type $X$.\ Let $\alpha,\beta\in R$ be a pair of non-isotropic roots in $R$ such that $\alpha+\beta$ is also a non-isotropic
	root in $R$.\ Then the set $R_{\alpha,\beta}:=R\cap (\bbbr\a\oplus\bbbr\b)$ is an irreducible reduced finite subsystem of $R$.
\end{lem}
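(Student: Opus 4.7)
The plan is to work in the subspace $\v_{\a,\b}:=\bbbr\a\oplus\bbbr\b$ of $\v$ and first show that it meets $\v^0$ only in $0$. Once this is established, the form on $\v_{\a,\b}$ is positive definite, $\v_{\a,\b}$ is automatically $2$-dimensional, and $R_{\a,\b}$ becomes a nullity-zero EARS, hence a finite reduced root system; irreducibility will then follow from a short orthogonality argument using the third root $\a+\b$.

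The key preliminary step, and the main obstacle, is ruling out $\bar\a=\pm\bar\b$ in $\bar\v$. Assume $\bar\a$ and $\bar\b$ are linearly dependent; since $\bar R$ is reduced, this forces $\bar\b=\pm\bar\a$. In the case $\bar\b=-\bar\a$, one writes $\b=-\a+\d$ with $\d\in\v^0$, which gives $\a+\b=\d\in\v^0$, isotropic, contradicting the hypothesis. In the case $\bar\b=\bar\a$, one writes $\b=\a+\d$ with $\d\in R^0$ (via (R4)), so $\a+\b=2\a+\d$. Using the description (\ref{eq99-1}) I decompose $\a=\adot+\d_\a$ with $\adot\in\rd^\times$; then the ``finite part'' of $\a+\b$ is $2\adot$, and for $\a+\b$ to be a non-isotropic root of $R$ we would need $2\adot\in\rds\cup\rdl$, contradicting that $\rd$ is a reduced finite root system. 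Hence $\bar\a$ and $\bar\b$ are linearly independent in $\bar\v$, so $\v\to\bar\v$ restricts to an injection on $\v_{\a,\b}$, giving $\v_{\a,\b}\cap\v^0=\{0\}$.

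I next verify that $R_{\a,\b}$ satisfies (R1)--(R5) of Definition \ref{def2} in $\v_{\a,\b}$. For (R1), the chain $\bbbz\a+\bbbz\b\sub\la R_{\a,\b}\ra\sub\la R\ra\cap\v_{\a,\b}$ exhibits $\la R_{\a,\b}\ra$ as a discrete subgroup of $\v_{\a,\b}$ containing a full sublattice, hence a full lattice itself. Axioms (R2) and (R5) are inherited directly from $R$. For (R3), reflections $w_{\a'}$ with $\a'\in R_{\a,\b}^\times$ preserve $\v_{\a,\b}$, so the stability of $R$ under reflections transfers to $R_{\a,\b}$. Axiom (R4) holds vacuously because $\v_{\a,\b}^0=\{0\}$ forces $R_{\a,\b}^0=\{0\}$. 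Since the nullity is zero, $R_{\a,\b}$ is a finite reduced root system in $\v_{\a,\b}$.

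Finally, irreducibility follows from a direct argument: suppose $R_{\a,\b}^\times=A\sqcup B$ were a nontrivial orthogonal partition. Since $\{\a,\b\}$ spans the positive-definite space $\v_{\a,\b}$, no nonzero vector is orthogonal to both, so $\a$ and $\b$ cannot lie in the same component. But if $\a\in A$ and $\b\in B$, then $(\a,\b)=0$, and the non-isotropic root $\a+\b$ satisfies $(\a+\b,\a)=(\a,\a)\neq 0$ and $(\a+\b,\b)=(\b,\b)\neq 0$, placing it simultaneously in $A$ and $B$---a contradiction. Hence $R_{\a,\b}$ is irreducible, completing the proof.
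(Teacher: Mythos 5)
Your proof is correct, and its first and last steps track the paper closely: your dichotomy $\bar\beta=\pm\bar\alpha$ (ruling out the minus sign because $\alpha+\beta$ would then be isotropic, and the plus sign because the finite part $2\dot\alpha$ of $\alpha+\beta$ cannot lie in the reduced system $\dot R$) is exactly the paper's Claim~1, and your orthogonal-partition argument using $\alpha+\beta$ is a direct, in-$\v_{\alpha,\beta}$ version of the paper's Claim~3. Where you genuinely diverge is finiteness. The paper proves it by hand (Claim~2): if $R_{\alpha,\beta}$ were infinite, there would be $\dot\gamma\in\dot R$ and infinitely many isotropic $\delta_n$ with $\dot\gamma+\delta_n=k_n\alpha+k'_n\beta$, and comparing components in $\v=\vd\oplus\v^0$, using the non-proportionality of $\dot\alpha,\dot\beta$, forces all $\delta_n$ to coincide. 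You instead show $\v_{\alpha,\beta}\cap\v^0=\{0\}$, verify (R1)--(R5) of Definition~\ref{def2} on $\v_{\alpha,\beta}$, and conclude that $R_{\alpha,\beta}$ is an EARS of nullity zero, hence a finite reduced root system. This reduction is attractive: it makes positive definiteness explicit (the paper only asserts ``the form restricted to $\v_{\alpha,\beta}$ is positive'' in Claim~4) and it exhibits $R_{\alpha,\beta}$ as a subsystem in the sense of Definition~\ref{newdef2}. Its one soft spot is the step ``nullity zero $\Rightarrow$ finite'', which you cite rather than prove; the only such fact quoted in the paper (after Definition~\ref{def2}, from \cite{AABGP97}) is that $\bar R$ is a finite root system for an \emph{irreducible} EARS, while in your write-up irreducibility is established only afterwards. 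Since your irreducibility argument nowhere uses finiteness, this is an ordering blemish rather than a gap: either prove irreducibility first and then invoke that fact with $\v^0_{\alpha,\beta}=\{0\}$, or keep your order but insert a direct finiteness argument such as the paper's coefficient comparison.
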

\begin{proof}
	We prove the lemma through proving four claims as follows:\\
	\textbf{Claim 1}: $\bar{\alpha},\bar{\beta}$ are non-proportional in $\bar{\v}$.
	If not, then as $R$ is reduced, the only possibilities are  $\bar{\beta}=\pm\bar{\alpha}$.
	But then in this case, if $\bar{\beta}=-\bar{\alpha}$ then $\alpha+\beta$ has to be
	isotropic which is impossible by the assumption, and, if  $\bar{\beta}=\bar{\alpha}$ then $2\bar\a\in\bar{R}$ which is again impossible.\ Hence Claim 1 follows.\\
	\textbf{Claim 2}: $R_{\alpha,\beta}$ is finite.
	We know that $\bar{R}$ is finite. Therefore, if $R_{\alpha,\beta}$ is not finite then there exist $\dot{\gamma}\in\dot{R}$ and an infinite sequence $\delta_{n}$ of
	isotropic roots in $R$ such that $\dot{\gamma}+\delta_{n}\in R_{\alpha,\beta}$ for every $n\in\mathbb{N}$.\ Then for each $n$, there exist $k_{n},k'_{n}\in\bbbr$
	such that
	\begin{equation}\label{eq:lemr2srs00}
	\dot{\gamma}+\delta_{n}=k_{n}\alpha+k'_{n}\beta.
	\end{equation}
	Now we have $\alpha=\dot{\alpha}+\delta$ and $\beta=\dot{\beta}+\delta'$
	where $\dot\a,\bd\in\rd$ are non-proportional and $\delta,\delta'$ are isotropic.
	By (\ref{eq:lemr2srs00}) we have
	\[
	\dot{\gamma}+\delta_{n}=k_{n}\dot{\alpha}+k'_{n}\dot{\beta}+k_{n}\delta+k_{n}'\delta',\quad\hbox{ for all } n\in\mathbb{N}.
	\]
	Since by Claim 1, $\dot{\alpha},\dot{\beta}$ are non-proportional, this gives
	$k_{n}=k_{m}$ and $k_{n}'=k_{m}'$ for all $n,m\in\mathbb{N}$.\ Thus $\delta_{n}=\delta_m$ for all $n,m\in\mathbb{N}$ which is a contradiction.\\
	\textbf{Claim 3}: $R_{\alpha,\beta}$ is irreducible.
	It is enough to show that $\bar{R}_{\alpha,\beta}$ is irreducible.\ But
	this is immediate as  $\bar{\alpha},\bar{\beta}$ are non-proportional and
	$\bar{\alpha}+\bar{\beta}$ is a root.\\
	\textbf{Claim 4}: $R_{\alpha,\beta}$ is an irreducible finite root system
	in $\v_{\alpha,\beta}:=\bbbr\a\oplus\bbbr\b$. By Claims 2,3, $R$ is finite and irreducible. Moreover, the form restricted to $\v_{\a,\b}$ is positive.\ Now since the subgroup $\w_{\alpha,\beta}$ of the Weyl group of $R$ generated by
	the reflections associated to $\alpha,\beta$ clearly preserves $R_{\alpha,\beta}$, and $(\gamma,\eta^\vee)\in\bbbz$ for all
	$\gamma,\eta\in R_{\a,\b}\setminus\{0\}$, we get from Remark \ref{rem100} that the claim holds.\end{proof}
\begin{cons}\label{Consr2rst}
	In the situation of Lemma~\ref{lemr2srs}, we have $$\text{Type}(R_{\alpha,\beta})=\text{Type}(\dot{R}_{\dot{\alpha},\dot{\beta}}).$$
\end{cons}
\begin{proof}
	By definition, $R_{\a,\b}$  has the same type of
	$\bar{R}_{\a,\b}=\bar{R}\cap (\bbbz\bar{\a}\oplus\bbbz\bar\b)\cong \rd_{\dot\a,\dot\b}.$
\end{proof}

\begin{DEF}\label{defrevise}{\rm
		A pair $\{\a,\b\}$ of non-isotropic roots in $R$ satisfying $\a+\b\in R^\times$  is called
		a {\it nilpotent} pair.}\ {According to Lemma~\ref{lemr2srs}, in this case $R_{\a,\b}$ is an irreducible finite root system.}
\end{DEF}
Let $\{\a\,\b\}$ be a nilpotent pair.\ By Lemmas \ref{lemr2srs} and \ref{lem99}, $\ll_{R_{\a,\b}}$ is a finite-dimensional simple Lie algebra of rank $2$. Let $\ll_{\a,\b}$ be the subalgebra of $\ll$ generated by
$\ll_{\pm\a},\ll_{\pm\b}$.
It follows
from \cite[Lemma 1.3]{AG01}\ {that there exists a base $\{\gamma_1,\gamma_2\}$ of $R_{\a,\b}$ such that
	$\ll_{\gamma_i}\sub\ll_{\a,\b}$, {$i=1,2$.}}\ It now follows again from
\cite[Lemma 1.3]{AG01} that $\ll_{\a,\b}$ contains
all root spaces $\ll_{\gamma}$, $\gamma\in R_{\a,\b}\setminus\{0\}$ and
so $\ll_{\a,\b}=\ll_{R_{\a,\b}}$.\
We denote the nilpotent part of this Lie algebra obtained from
positive roots by $\mathfrak{l}_{\alpha,\beta}$.\ Note that the above argument shows that for each nilpotent pair
$\{\a,\b\}$, we have $\ll_{\a,\b}=\ll_{\a',\b'}$ where $\{\a',\b'\}$ is a base of $R_{\a',\b'}=R_{\a,\b}$.

\begin{pro}\label{affine-99} Let $\{\a,\b\}$ be a nilpotent pair in $R$ and $\d=\d_\a+\d_\b$.	
	Then $\d\in R^0$, moreover if $\d\neq0$, then
	$R_{\a,\b,\d}:=R_{R_{\a,\b},\d}=R(R_{\a,\b}, S_{\a,\b},L_{\a,\b})$
	where $S_{\alpha,\beta}:=\mathbb{Z}\delta$ and
	$$L_{\alpha,\beta}=\left\{\begin{array}{ll}
	\mathbb{Z}\delta&\hbox{if }R_{\alpha,\beta}\hbox{ is simply-laced,}\\
	2\mathbb{Z}\delta&\hbox{if }R_{\alpha,\beta}\hbox{ is of type }B_2=C_2,\\
	3\mathbb{Z}\delta&\hbox{if }R_{\alpha,\beta}\hbox{ is of type }G_2,
	\end{array}\right.
	$$
	(see (\ref{eq99}) for the notation $R(R_{\a,\b},S_{\a,\b},L_{\a,\b})$).
\end{pro}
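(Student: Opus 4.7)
The plan is to proceed in three stages. First, decompose $\a + \b$ using (\ref{eq99-1}): since $\a + \b = (\dot\a + \dot\b) + \d$ is by hypothesis a non-isotropic root of $R$, its dotted part $\dot\a + \dot\b$ is nonzero in $\rd$ and its isotropic part $\d$ must lie in $S$ (if $\dot\a + \dot\b$ is short) or in $L$ (if it is long). In either case $\d \in S \cup L \sub R^0$, establishing the first assertion.

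Now suppose $\d \neq 0$. Proposition~\ref{affine-999} identifies $R_{\a,\b,\d}$ as an irreducible affine subsystem of $R$ of nullity one, lying in $\hbox{span}_\bbbr(R_{\a,\b} \cup \{\d\})$. By Lemma~\ref{lemr2srs} and Consequence~\ref{Consr2rst}, $R_{\a,\b}$ is a finite irreducible root system of type $A_1$, $A_1 \times A_1$, $A_2$, $B_2$, or $G_2$, and the projection $\gamma \mapsto \dot\gamma$ yields a unique parametrization $\gamma = m\a + n\b$ with $(m,n) \in \Z^2$ of each $\gamma \in R_{\a,\b}$, corresponding to $\dot\gamma = m\dot\a + n\dot\b$ in $\dot T := \rd_{\dot\a,\dot\b}$. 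Since $R_{\a,\b,\d}$ is a nullity-one irreducible EARS with finite part $R_{\a,\b}$, by \cite[Theorem 2.31]{ABGP97} it must take the form $R(R_{\a,\b}, S_{\a,\b}, L_{\a,\b})$ for some semilattices $S_{\a,\b}, L_{\a,\b} \sub \bbbr\d$.

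To identify these semilattices explicitly, compute, for $\gamma = m\a + n\b \in R_{\a,\b}$ and $p \in \Z$,
\[
\gamma + p\d \;=\; \dot\gamma \;+\; \bigl(m\d_\a + n\d_\b + p\d\bigr),
\]
so that $\gamma + p\d \in R$ iff $m\d_\a + n\d_\b + p\d$ lies in $S$ or $L$ according as $\dot\gamma$ is short or long in $\rd$. Letting $\gamma$ range over the short (respectively long) roots of $R_{\a,\b}$ and recording the admissible $p$ pins down $S_{\a,\b}$ (respectively $L_{\a,\b}$). Using the semilattice properties of $S$ and $L$ (in particular $2\la S\ra \sub S$) together with the type-dependent interactions between $S$ and $L$ recorded in \cite[Chapter II]{AABGP97} (the relevant relations being $L + 2\la S\ra \sub L$ in types $B_l, C_l, F_4, B_2$ and $L + 3\la S\ra \sub L$ in $G_2$), one expects $S_{\a,\b} = \Z\d$ in every case, and $L_{\a,\b}$ equal to $\Z\d$, $2\Z\d$, or $3\Z\d$ according as $R_{\a,\b}$ is simply-laced, of type $B_2$, or of type $G_2$.

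The main obstacle is the $B_2$ and $G_2$ cases. Depending on which of $\dot\a, \dot\b$ is short or long in $\rd$, the elements $\d_\a, \d_\b$ lie in $S$ or $L$, and this subdivides the verification into several configurations. For each one must show both that the claimed $k\Z\d$ is contained in $L_{\a,\b}$ (using the semilattice closure properties) and that no strictly larger sublattice of $\Z\d$ lies in $L_{\a,\b}$; the latter amounts to demonstrating that certain odd (respectively non-multiples-of-three) shifts of long roots by $\d$ fail to be roots of $R$, using that the relations in \cite[Chapter II]{AABGP97} are sharp and that $\d$ need not itself lie in $L$ in the given configuration.
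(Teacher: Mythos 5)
Your opening step ($\d\in R^0$, read off from the decomposition of $\a+\b$) matches the paper. Beyond that, however, your argument is a plan rather than a proof: after invoking Proposition~\ref{affine-999} you say that recording the admissible shifts ``pins down'' $S_{\a,\b}$ and $L_{\a,\b}$ and that ``one expects'' the stated answer, but the case analysis -- which is precisely the content of the proposition -- is never carried out. The paper's proof consists of exactly that computation: after replacing $\{\a,\b\}$ by a base of $R_{\a,\b}$ (justified in the paragraph preceding the proposition), it lists the roots of $R_{\a,\b}$ in the simply-laced, $B_2$ and $G_2$ cases and verifies, root by root, the containments $\pm\gamma+S_{\a,\b}\sub R$, resp.\ $\pm\gamma+L_{\a,\b}\sub R$, using the semilattice relations $S+2\la S\ra\sub S$, $S+L\sub S$, $L+2\la S\ra\sub L$ (resp.\ $L+3\la S\ra\sub L$ for $G_2$). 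Nothing in your sketch substitutes for these verifications. (Also, minor point: for a nilpotent pair $R_{\a,\b}$ is irreducible of rank $2$, so the types $A_1$ and $A_1\times A_1$ you list cannot occur.)

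More seriously, the ``main obstacle'' you isolate -- showing that no strictly larger sublattice of $\Z\d$ lies in $L_{\a,\b}$, i.e.\ that odd (resp.\ non-multiple-of-three) shifts of long roots of $R_{\a,\b}$ never land in $R$ -- is not only left unproved but cannot be established from the hypotheses. Take $R=R(\rd,S,L)$ of type $B_2$ with $S=L=\Z\sigma$ (the untwisted nullity-one case), $\a=\dot\a+\sigma$ with $\dot\a$ short, and $\b=\dot\b$ long, so $\d_\b=0$ and $\d=\sigma\neq0$. Then $\{\a,\b\}$ is a nilpotent pair and $R_{\a,\b}$ is of type $B_2$, yet $\b+\d=\dot\b+\sigma\in\dot\b+L\sub R$, so the odd shift $\b+\d$ does lie in $R_{\a,\b,\d}$. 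Note that the paper's own proof never attempts this reverse inclusion: it only establishes that the set $R(R_{\a,\b},S_{\a,\b},L_{\a,\b})$ is contained in $R$ (hence in $R_{\a,\b,\d}$), which is the direction used later in the paper. So the portion of your plan that goes beyond the paper's argument is exactly the portion that breaks down, while the portion the paper actually proves is the portion you left at ``one expects.''
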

\proof With the notation of (\ref{eq99}) let $R=R(\dot R,S,L)$. Put $S':=S$ if $\a$ is short in $R$, and put $S':=L$ if $\a$ is long in $R$.
Since $\a+\b\in R$, we have $\d\in R^0$. If $\d\neq 0$, we have from Proposition \ref{affine-999} that 
$R_{\a,\b,\d}$ is an irreducible affine root system. To complete the proof,
we proceed in a few steps according to the type of $R_{\a,\b}$. Suppose first that $R_{\a,\b}$ is simply laced.
As $\a,\b,\a+\b$ are in $R_{\a,\b}$, we have
$R_{\a,\b}=\{\pm\a,\pm\b,\pm(\a+\b)\}$.
Now for any
$n\in\bbbz$, we have
$$\pm\a+n\d
=\pm\dot\a+(n+1)\d_\a+n\d_\b\in
\pm\dot\a+S'+2S'\sub\pm\dot\a+S'\sub R.$$
This shows that $\pm\a+\bbbz\d=\pm\a+S_{\a,\b}\sub R.$ Similarly, we get $\pm\b+S_{\a,\b}\sub R$. Also we have
$\pm(\a+\b)+S_{\a,\b}=\pm(\dot\a+\dot\b)+\bbbz\d\sub\pm(\dot\a+\dot\b)+S'\sub R$, as required.

Next, suppose that $R_{\a,\b}$ is not simply laced. As discussed in the paragraph preceding the lemma, we may assume that
$\{\a,\b\}$ is a base of $R_{\a,\b}$.  Without loose of generality, assume that $\a$ is short and $\b$ is long.\
If we are in type $B_2$, then $\pm\{\a,\a+\b\}$ and
$\pm\{\b,2\a+\b\}$ are the sets of short and long roots of $R_{\a,\b}$, respectively.\ Then we have
$$
\begin{array}{l}
\pm\a+S_{\a,\b}=\pm(\dot\a+\d_\a)+\bbbz(\d_\a+\d_\b)\sub\pm\dot\a+S+2S\sub\pm\dot\a+S\sub R,\\
\pm(\a+\b)+S_{\a,\b}
=\pm(\dot\a+\dot\b+\d)+\bbbz\d\sub\pm(\dot\a+\dot\b)+S\sub R,\\
\pm\b+L_{\a,\b}=\pm(\dot\b+\d_\b)+2\bbbz\d
\sub\pm\dot\b+L+2L\sub\pm\dot\b+L\sub R,\\
\pm(2\a+\b)+L_{\a,\b}=\pm(2\dot\a+\dot\b+2\d_\a+\d_\b)+2\bbbz\d\\
\hspace{4cm}\sub\pm(2\dot\a+\dot\b)+2S+L+2S
\sub\pm(2\dot\a+\dot\b)+L\sub R.
\end{array}
$$
Finally, we consider the case in which $R_{\a,\b}$ is of type $G_2$. In this case
$\pm\{\a,\a+\b,\b+2\a\}$ and $\pm\{\b+3\a,\b+3\a,2\b+3\a\}$ are the sets of short and long roots of $R_{\a,\b},$
respectively. Now using an argument similar to the case $B_2$, using the facts that $L+3S\sub L$, $S+L\sub S$ and $S+2S\sub S$, we are done.\qed

\begin{lem}\label{lemisoweylrs}
	Notations as in Lemma~\ref{lemr2srs} and Proposition~\ref{affine-99}.\ Let $\w$ be the Weyl group associated to a reduced {EARS $R$}.\ Then for any
	nilpotent pair of roots $\{\a,\b\}$ and any $w\in\w$ we have
	\begin{equation}\label{eq:lemisoweylrs00}
	R_{\a,\b}\cong wR_{\a,\b}=R_{w\a,w\b},
	\end{equation}
	and
	\begin{equation}\label{eq:lemisoweylrs01}
	R_{\a,\b,\delta}\cong wR_{\a,\b,\delta}=R_{w\a,w\b,\delta}.
	\end{equation}
\end{lem}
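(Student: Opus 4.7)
The plan is to reduce the statement to three basic facts about any $w \in \w$: (i) $w$ preserves $R$, because its generating reflections $w_\gamma$, $\gamma \in R^\times$, do so by axiom (R3); (ii) $w$ is a linear isometry of $\v$, being a product of reflections; and, most importantly, (iii) $w$ acts as the identity on $\v^0$. The third fact holds because for any $\sigma \in \v^0$ and $\gamma \in R^\times$ one has $(\sigma,\gamma) = 0$, whence $w_\gamma(\sigma) = \sigma - (\sigma,\gamma^\vee)\gamma = \sigma$. In particular, since $\d = \d_\a + \d_\b$ of Proposition~\ref{affine-99} lies in $R^0 \subseteq \v^0$, we have $w\d = \d$, which is what allows the same symbol $\d$ to appear on both sides of (\ref{eq:lemisoweylrs01}).

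With these facts in hand, (\ref{eq:lemisoweylrs00}) is a short computation. First, $\{w\a,w\b\}$ is again a nilpotent pair, because $w\a,w\b \in R^\times$ and $w\a + w\b = w(\a+\b) \in R^\times$, so $R_{w\a,w\b}$ is defined by Lemma~\ref{lemr2srs}. Unwinding $R_{\a,\b} = R \cap (\bbbr\a \oplus \bbbr\b)$ and applying the linear bijection $w$,
$$
w R_{\a,\b} \;=\; wR \cap (\bbbr w\a \oplus \bbbr w\b) \;=\; R \cap (\bbbr w\a \oplus \bbbr w\b) \;=\; R_{w\a,w\b},
$$
and the isomorphism $R_{\a,\b} \cong wR_{\a,\b}$ follows because $w$ is an isometry and hence carries all the reflections, coroots and Cartan integers of $R_{\a,\b}$ to those of $wR_{\a,\b}$.

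For (\ref{eq:lemisoweylrs01}), I would unwind the definition from (\ref{affine-100}) with $T = R_{\a,\b}$,
$$
R_{\a,\b,\d} \;=\; \big((R_{\a,\b}^\times + \bbbz\d) \cap R\big) \,\cup\, \big((T_\d - T_\d) \cap R^0\big),
$$
and push $w$ through each piece. Using linearity together with $wR = R$, $w\d = \d$, $w R_{\a,\b} = R_{w\a,w\b}$ from (\ref{eq:lemisoweylrs00}), and the fact that $w$ fixes $\v^0$ (hence $R^0$) pointwise, both pieces transform into the corresponding pieces built from $\{w\a,w\b\}$, giving $wR_{\a,\b,\d} = R_{w\a,w\b,\d}$. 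The isomorphism $R_{\a,\b,\d} \cong wR_{\a,\b,\d}$ again follows from $w$ being a linear isometry. The proof is essentially bookkeeping; I do not anticipate any real obstacle, the only subtlety being that the $\d$ on both sides of the second equality really is the same element, which is secured by (iii).
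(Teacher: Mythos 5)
Your proposal is correct and follows essentially the same route as the paper, which disposes of the lemma by noting that $\w$ acts by linear isometries preserving $R$; your write-up simply makes the bookkeeping explicit. The one point you spell out that the paper leaves implicit --- that every $w\in\w$ fixes the radical $\v^0$ (hence $\d=\d_\a+\d_\b$) pointwise, since $(\sigma,\gamma)=0$ for $\sigma\in\v^0$, $\gamma\in R^\times$ --- is exactly what justifies keeping the same $\d$ on both sides of (\ref{eq:lemisoweylrs01}), so it is a worthwhile clarification rather than a deviation.
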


\proof It follows from the fact that the form is invariant under the action
of the Weyl group, in particular $\w$ preserves root lengths.
\qed

\section{Groups of Extended Affine Lie Type}\label{secGAEI}
\subsection{Groups Associated to {EARSs}}\label{subsecGAERI}
We proceed with the same notations and assumptions as in the previous two sections.\ In particular, we assume $(\ll,\hh,\fm)$ is { a tame EALA with root system $R$}, $R$ is reduced of type $\dot{R}\subset R$ and $\Lambda\sub\hh^\star$ is the lattice generated by isotropic roots.

Let $\bbbc_{\sigma}$ be the associative $\bbbc$-algebra of the crossed product of $\Lambda$ and $\bbbc$ with respect to {a $2$-cocycle}  $\sigma$ (see \cite{Pass89}).\ {Note that the definition of $\bbbc_\sg$ depends on the nullity of $\ll$  (or equivalently the nullity of $R$), and the chosen $2$-cocycle $\sg$.\ We call $\bbbc_{\sigma}$ the \textit{coordinate ring} associated to the $\ll$ (or $R$).\ {The algebra $\bbbc_{\sigma}$ is known in the literature with different names such as $\Lam$-torus, twisted group algebra, or quantum torus  (see \cite[\S 1.4]{AYY15}).}}

Let us recall that as a vector space $\bbbc_{\sigma}$ has a basis $\{c_\lam\mid\lam\in \Lam\}$, and  that the multiplication on $\bbbc_{\sigma}$ is determined by 
$$c_\lam c_\tau = \sg(\lam,\tau)c_{\lam+\tau},\quad (\lam,\tau\in\Lam).$$
{When $R$ is not of type $A_n$ (for some $n\ge2$), we always assume that $\sg$ is commutative.
Note that the above condition on $\sigma$ is not a major restriction on the simply laced cases by \cite[Theorem 40.22(v\&iv)]{Tits02} when we are mainly interested in group structures.}
 
For a pair of roots $\alpha,\beta\in R$ define $R_{\alpha,\beta}^{+}:=(\mathbb{N}\alpha+\mathbb{N}\beta)\cap R$.\ When $R$
is a finite root system, there exists an ordering, {denoted by $<$,} on the set
of positive roots $R^{+}$ such that if $\alpha<\beta$ then $\beta-\alpha\in R^{+}$.\
On such sets of positive roots we always consider this ordering and call it the {\it canonical order}.  
In what follows, the term $(x,y):=xyx^{-1}y^{-1}$ denotes the group
commutator for two elements $x$ and $y$ of a group.
\begin{DEF}\label{DefStfinite}{\rm
		Let $\dot{R}$ be a finite reduced root system.\ Let $A$ be an associative
		unital ring and $U(A)$ the group of units of $A$.\ The group generated by $\hat{x}_{\dot{\alpha}}(a)$ for all $\dot{\alpha}\in\dot{R}^\times$ and $a\in A$ subject to the following relations
		is called the {\it Steinberg group} {of type $\dot{R}$ over $A$} and is denoted by $\St_{\dot{R}}(A)$:
		\begin{description}
			\item[StF1] $\hat{x}_{\dot{\alpha}}(a)$ is additive in $a$.
			\item[StF2] {If $\text{rank}(\dot{R})\geq2$ consider: for a pair of roots $\dot{\alpha},\dot{\beta}\in\dot{R}$, if $\dot{\alpha}+\dot{\beta}\in \dot{R}^\times$, then
				\begin{equation}\label{eq:DefStfinite22}
				(\hat{x}_{\dot{\alpha}}(a),\hat{x}_{\dot{\beta}}(b))=\prod_{i\dot{\alpha}+j\dot{\beta}\in \dot{R}_{\dot{\alpha},\dot{\beta}}^{+}}\hat{x}_{i\dot{\alpha}+j\dot{\beta}}(c_{ij}a^{i}b^{j}),
				\end{equation}
				with the canonical order on the finite set $\dot{R}_{\alpha,\beta}^{+}$, where $c_{ij}$'s in (\ref{eq:DefStfinite22}) are chosen as in \cite[Lemma 15]{St67} corresponding to $i\dot{\alpha}+j\dot{\beta}\in \dot{R}$.}
			\item[StF$\text{2}'$] If $\text{rank}(\dot{R})=1$, then 
			\begin{equation}\label{eq:finiteLWA}
			\hat{n}_{\dot{\alpha}}(a)\hat{x}_{\dot{\alpha}}(b)\hat{n}_{\dot{\alpha}}(a)^{-1}=\hat{x}_{-\dot{\alpha}}(-a^{-2}b),
			\end{equation}
			for $a\in U(A)$ and $b\in A$, where 
			\begin{equation}\label{eq:finiteWF}
			\hat{n}_{\dot{\alpha}}(a):=\hat{x}_{\dot{\alpha}}(a)\hat{x}_{-\dot{\alpha}}(-a^{-1})\hat{x}_{\dot{\alpha}}(a).
			\end{equation}
	\end{description}}
\end{DEF}

{ 
	Let $\hat{N}$ and $\hat{T}$ denote the subgroups of $\St_{\dot{R}}(A)$ generated
	by  $\hat{n}_{\dot{\alpha}}(a)$ and  $\hat{h}_{\dot{\alpha}}(a):=\hat{n}_{\dot{\alpha}}(a)\hat{n}_{\dot{\alpha}}(1)^{-1}$ for $a\in U(A)$, $\dot{\alpha}\in\dot{R}^\times$.\ I.e,
	\begin{equation}\label{eq:MPolsgF00}
	\hat{N}:=\langle\hat{n}_{\dot{\alpha}}(a)~|~\dot{\alpha}\in\dot{R}^\times,~~a\in U(A)\rangle. 
	\end{equation}
	and
	\begin{equation}\label{eq:MTorElF01}
	\hat{T}:=\langle\hat{h}_{\dot{\alpha}}(a)~|~\dot{\alpha}\in\dot{R}^\times,~~a\in U(A)\rangle\sub \hat{N}. 
	\end{equation} 
	It immediately follows from  (\ref{eq:finiteWF}) that 
	\begin{equation}\label{eq:MTorElF02}
	\hat{n}_{\dot{\alpha}}(a)^{-1}=\hat{n}_{\dot{\alpha}}(-a)
	\end{equation} 
	for $a\in U(A)$, $\dot{\alpha}\in\dot{R}^\times$.
}
\begin{lem}\label{LemStFrExtRelat}
	In the above setting, if $A$ is commutative domain with characteristic zero, then the following relations in $\St_{\dot{R}}(A)$ hold: 
	\begin{description}
		\item[StF3] $\hat{n}_{\dot{\alpha}}{( a)}\hat{x}_{\dot{\beta}}(b)\hat{n}_{\dot{\alpha}}(a)^{-1}=\hat{x}_{w_{\dot\alpha}\dot{\beta}}(c(\dot{\alpha},\dot{\beta})a^{-\langle\dot{\beta},\dot{\alpha}\rangle}b)$,
		\item[StF4] $\hat{n}_{\dot{\alpha}}(a)\hat{n}_{\dot{\beta}}(b)\hat{n}_{\dot{\alpha}}(a)^{-1}=\hat{n}_{w_{\dot\alpha}\dot{\beta}}(c(\dot{\alpha},\dot{\beta})a^{-\langle\dot{\beta},\dot{\alpha}\rangle}b)$,
		\item[StF5]$\hat{n}_{\dot{\alpha}}(a)\hat{h}_{\dot{\beta}}(b)\hat{n}_{\dot{\alpha}}(a)^{-1}=\hat{h}_{w_{\dot\alpha}\dot{\beta}}(c(\dot{\alpha},\dot{\beta})a^{-\langle\dot{\beta},\dot{\alpha}\rangle}b)\hat{h}_{w_{\dot\alpha}\dot{\beta}}(c(\dot{\alpha},\dot{\beta})a^{-\langle\dot{\beta},\dot{\alpha}\rangle})^{-1}$,
		\item[StF6] $\hat{n}_{\dot{\alpha}}(a)=\hat{n}_{-\dot{\alpha}}(c(\dot{\alpha},\dot{\beta})a^{-1})$,
	\end{description}
	where $c(\dot{\alpha},\dot{\beta})=\pm1$ only depends on $\dot{\alpha}$ and $\dot{\beta}$
	and satisfies $c(\dot{\alpha},\dot{\beta})=c(\dot{\alpha},-\dot{\beta})$.\ Hence
	$\hat{T}$ is a normal subgroup of $\hat{N}$. 
\end{lem}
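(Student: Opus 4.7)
The plan is to establish StF3 first, since it is the engine of the lemma; the remaining relations StF4--StF6 will then drop out of StF3 by short manipulations, and the normality of $\hat T$ in $\hat N$ will be an immediate consequence of StF5.

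For StF3 I would work inside the subgroup of $\St_{\dot R}(A)$ generated by $\hat x_{\pm\dot\alpha}(A)$ and $\hat x_{\pm\dot\beta}(A)$, and split on whether $\dot\alpha,\dot\beta$ are proportional. If $\dot\beta=\pm\dot\alpha$ the relation is immediate from StF2$'$ (with $c(\dot\alpha,\pm\dot\alpha)=-1$ and the observation that $w_{\dot\alpha}\dot\beta=-\dot\beta$, $\langle\dot\beta,\dot\alpha\rangle=\pm 2$). If $\dot\alpha,\dot\beta$ are non-proportional, I would expand
\[
\hat n_{\dot\alpha}(a)=\hat x_{\dot\alpha}(a)\,\hat x_{-\dot\alpha}(-a^{-1})\,\hat x_{\dot\alpha}(a)
\]
and conjugate $\hat x_{\dot\beta}(b)$ by each of the three factors, applying the Chevalley commutator formula StF2 (with the Steinberg signs $c_{ij}$ of \cite[Lemma 15]{St67}) at every step. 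Collecting terms in the canonical order of $\dot R^+_{\dot\alpha,\dot\beta}$, the intermediate words telescope to $\hat x_{w_{\dot\alpha}\dot\beta}(c(\dot\alpha,\dot\beta)\,a^{-\langle\dot\beta,\dot\alpha\rangle}\,b)$. This is the classical rank-$2$ verification carried out type by type in \cite[Lemma 19]{St67}, and the symmetry $c(\dot\alpha,\dot\beta)=c(\dot\alpha,-\dot\beta)$ falls out from the invariance of the Steinberg signs under $\dot\beta\leftrightarrow -\dot\beta$. Commutativity of $A$ and characteristic zero are used to compare coefficients of monomials $a^ib^j$ and to invert $a$ freely.

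Once StF3 is in hand, StF4 follows by applying StF3 separately to each of the three $\hat x$-factors comprising $\hat n_{\dot\beta}(b)=\hat x_{\dot\beta}(b)\,\hat x_{-\dot\beta}(-b^{-1})\,\hat x_{\dot\beta}(b)$ and recognising the resulting word as $\hat n_{w_{\dot\alpha}\dot\beta}(c(\dot\alpha,\dot\beta)\,a^{-\langle\dot\beta,\dot\alpha\rangle}\,b)$, using $c(\dot\alpha,-\dot\beta)=c(\dot\alpha,\dot\beta)$ together with $\langle -\dot\beta,\dot\alpha\rangle=-\langle\dot\beta,\dot\alpha\rangle$. StF5 drops out of StF4 by writing $\hat h_{\dot\beta}(b)=\hat n_{\dot\beta}(b)\hat n_{\dot\beta}(1)^{-1}$ and applying StF4 to both factors. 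For StF6 I would unfold both sides via (\ref{eq:finiteWF}) and match them using additivity of $\hat x_{\dot\alpha}$ in its argument together with (\ref{eq:MTorElF02}).

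Normality of $\hat T$ in $\hat N$ is then immediate from StF5: conjugating a generator $\hat h_{\dot\beta}(b)$ of $\hat T$ by a generator $\hat n_{\dot\alpha}(a)$ of $\hat N$ yields a product of two $\hat h$-type elements and therefore lies in $\hat T$. The main obstacle in the plan is the rank-$2$ verification of StF3 in types $B_2$ and $G_2$, where StF2 carries several summands and the ordering, together with the exact Steinberg signs $c_{ij}$, must be tracked carefully; this bookkeeping is, however, precisely what \cite{St67} supplies, so I would simply appeal to it rather than redo the computation from scratch.
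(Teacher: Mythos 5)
Your overall architecture (establish StF3 first, then obtain StF4--StF6 and the normality of $\hat T$ by short manipulations) is reasonable, and your derivations of StF4, StF5 and of normality from StF5 are fine; note, though, that the paper itself takes a different and much shorter route: it passes to the fraction field $(A)$, quotes Steinberg's Lemma 37 (which proves exactly these relations for the presented group over a field), and transfers them back via the asserted embedding $\St_{\dot R}(A)\hookrightarrow\St_{\dot R}((A))$. Your plan amounts to redoing Steinberg's verification directly over $A$, which is legitimate in principle, but then you must actually carry the whole of that verification.

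The genuine gap is your treatment of the proportional case of StF3. When $\mathrm{rank}(\dot R)\ge 2$, the relation StF$2'$ is \emph{not} among the defining relations of $\St_{\dot R}(A)$ (Definition~\ref{DefStfinite} imposes StF$2'$ only in rank one), so for $\dot\beta=\pm\dot\alpha$ the formula $\hat n_{\dot\alpha}(a)\hat x_{\pm\dot\alpha}(b)\hat n_{\dot\alpha}(a)^{-1}=\hat x_{\mp\dot\alpha}(\pm a^{\mp2}b)$ is not ``immediate from StF$2'$''; deducing the rank-one conjugation relation from the commutator relations alone is precisely the delicate part of Steinberg's Lemma 37 (one writes $\hat x_{\dot\alpha}(b)$ in terms of root elements for roots non-proportional to $\dot\alpha$ and must control the extra factors arising in types $B_2$ and $G_2$), and it is also where the hypotheses on $A$ play a role. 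Your telescoping argument via StF2 is sound in substance only for non-proportional $\dot\alpha,\dot\beta$ (and your citation of Steinberg's Lemma 19 concerns the Chevalley group defined through a representation, not the abstractly presented group, so it cannot simply be quoted inside $\St_{\dot R}(A)$). The omission then propagates: StF6 does not follow from additivity and $\hat n_{\dot\alpha}(a)^{-1}=\hat n_{\dot\alpha}(-a)$ alone, since the identity $\hat x_{\dot\alpha}(a)\hat x_{-\dot\alpha}(-a^{-1})\hat x_{\dot\alpha}(a)=\hat x_{-\dot\alpha}(-a^{-1})\hat x_{\dot\alpha}(a)\hat x_{-\dot\alpha}(-a^{-1})$ is equivalent to an instance of the proportional case of StF3; and StF4 applied to the middle factor of $\hat n_{\dot\beta}(b)$ with $\dot\beta=\pm\dot\alpha$ also needs it. To close the gap you must either supply the proportional-case computation (as in Steinberg's proof of Lemma 37, or its extension to commutative rings), or follow the paper and transfer all the relations at once from $\St_{\dot R}((A))$.
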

\proof Let $(A)$ denote the field of fractions of $A$.\ The above relations hold
for $\St_{\dot{R}}((A))$ by \cite[Lemma 37]{St67}.\ By the functoriality of $\St_{\dot{R}}(-)$
we have $\St_{\dot{R}}(A)\hookrightarrow\St_{\dot{R}}((A))$.\ Hence the lemma.\qed

\begin{DEF}\label{DefSt}{\rm {Let $R$ be a reduced EARS of type $\dot R$ and $\sg$ a $2$-cocycle on the lattice
			$\Lam=\la R^0\ra$.}\
		We define the Steinberg group $\St_{R,\dot{R},\sigma}({\bbbc})$ associated
		to $R$ over ${\bbbc}$ to be the group generated by symbols $x_{\alpha}(t)$ for each $\alpha\in R^{\times}$ and $t\in {\bbbc}$ subject to the following relations:
		\begin{description}
			\item[St1] $x_{\alpha}(t)$ is additive in $t$.
			\item[St2] {If $\text{rank}(R)\geq2$ consider: for a pair of roots $\alpha=\dot{\alpha}+\delta_{\alpha},\beta=\dot{\beta}+\delta_{\beta}\in R^{\times}\subseteq\dot{R}+\Lambda$, if $\alpha+\beta\in R^{\times}$ (i.e., $\{\alpha,\beta\}$ is a nilpotent pair) then
				\begin{equation}\label{eq:DefSt22}
				(x_{\alpha}(s),x_{\beta}(t))=\prod_{i\alpha+j\beta\in R_{\alpha,\beta}^{+}}x_{i\alpha+j\beta}(c_{ij}{\sg_{i\delta_{\alpha},j\delta_{\beta}}}s^{i}t^{j}),
				\end{equation}
				with the canonical order on the finite set $R_{\alpha,\beta}^{+}$, where $c_{ij}$'s in (\ref{eq:DefSt22}) are chosen as in \cite[Lemma 15]{St67} corresponding to $i\dot{\alpha}+j\dot{\beta}\in \dot{R}$ and
				\begin{equation}\label{eq:DefSt22new} 
				\sg_{i\delta_{\alpha},j\delta_{\beta}}:=\sg(i\d_\a,j\d_\b)\prod_{k=1}^{i-1}\sg(k\d_\a,\d_\a)\prod_{k=1}^{j-1}\sg(\d_\b,k\d_\b).	
				\end{equation}	}	 			
			\item[St$\text{2}'$] If $\text{rank}(R)=1$ then 
			\begin{equation}\label{eq:LWA}
			n_{\alpha}(t)x_{\alpha}(u)n_{\alpha}(t)^{-1}=x_{-\alpha}(-\sigma(\delta_{\alpha},-\delta_{\alpha})^{-1}t^{-2}u),
			\end{equation}
			for $t\in {\bbbc}^{\ast}:=\bbbc\backslash\{0\}$ and $u\in {\bbbc}$, where 
			\begin{equation}\label{eq:WF}
			n_{\alpha}(t):=x_{\alpha}(t)x_{-\alpha}(-\sigma(\delta_{\alpha},-\delta_{\alpha})^{-1}t^{-1})x_{\alpha}(t).
			\end{equation}
			
	\end{description}}
	
\end{DEF}
{
	\begin{cons}\label{ConsSt21}
		In the above setting, if $\alpha+\beta\not\in R^{\times}$ then $(x_{\alpha}(t),x_{\beta}(s))=1$.
	\end{cons}	
	\proof {If $\alpha+\beta\not\in R^{\times}$ then $R_{\alpha,\beta}^{+}=\emptyset$ by Definition~\ref{defrevise}.\ Hence $(x_{\alpha}(t),x_{\beta}(s))=1$ by 
		{\bf St2}. }
	\qed
}	
\begin{rem}\label{RemDStFA}{\rm
		When $R$ is of finite or affine type then the 2-cocycle $\sigma\equiv1$ (see \cite[Lemma 2.10]{AYY15}), hence
		the Steinberg group defined above coincides with the definition of Steinberg groups 
		associated to finite and affine root systems (see \cite[\S 3.6]{Tits87}).}  
\end{rem}

\begin{rem}\label{RemDStWD}{\rm
		When there exists no copy of $G_2$ in $R^{\times}$, then the product in (\ref{eq:DefSt22}) does not depend on the chosen order since in {such cases}
		the right hand side of (\ref{eq:DefSt22}) has either one term ($R_{\alpha,\beta}=A_2$)
		or two terms ($R_{\alpha,\beta}=B_2$).\ And in the latter case, by {Consequence~\ref{ConsSt21}},
		the two terms commute.\ In general, by \cite[Chapter 3 Lemma 18]{St67} 
		the product does not depend on any chosen order for the root set $R^{+}_{\alpha,\beta}$ up to isomorphism.}
\end{rem}

For a reduced extended affine root system $R=R(\rd,S,L)$, we set $\tilde{R}:=R(\rd,\la S\ra,\la L\ra)$.\ {It is shown in \cite[Section 2]{Az99} that $\tilde{R}$ is also an EARS with the same type, rank, nullity and twist number as $R$.}\
We call $\rt$, the $\Lam$-{\it covering} of $R$.

\begin{pro}\label{ProStIso}
	Let $R$ be a reduced EARS of type $\dot{R}$.\ {Then there exists an epimorphism $\chi$ under which $\St_{\dot{R}}(\bbbc_{\sigma})$ is mapped onto $\St_{R,\dot{R},\sigma}(\bbbc)$.\ 
		Furthermore, let $\kappa_{\chi}$ be the kernel of $\chi$.\ Then the following short exact sequence is right-split
		\begin{equation}\label{eq:ProStIso000}
		\kappa_{\chi}\to\St_{\dot{R}}(\bbbc_{\sigma})\stackrel{\chi}{\to}\St_{R,\dot{R},\sigma}(\bbbc).
		\end{equation}
		Hence, 
		\begin{equation}\label{eq:ProStIso0001}
		\St_{\dot{R}}(\bbbc_{\sigma})\cong\kappa_{\chi}\rtimes\St_{R,\dot{R},\sigma}(\bbbc).
		\end{equation}
		In particular, $\St_{\dot{R}}(\bbbc_{\sigma})$ is isomorphic to $\St_{\tilde{R},\dot{R},\sigma}(\bbbc)$.}
\end{pro}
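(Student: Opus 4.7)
My plan is to construct the epimorphism $\chi$ explicitly on generators, verify it is a well-defined homomorphism, exhibit a right-section $\phi$, and then specialize to $R=\tilde R$.

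First I would define $\chi$ on the generators of $\St_{\dot R}(\bbbc_\sigma)$ by
\[
\chi(\hat x_{\dot\alpha}(tc_\lambda))=\begin{cases}x_{\dot\alpha+\lambda}(t)&\text{if }\dot\alpha+\lambda\in R^\times,\\ 1&\text{otherwise,}\end{cases}
\]
and extend to a generic $a=\sum_\lambda t_\lambda c_\lambda\in\bbbc_\sigma$ by the ordered product over $\lambda$, the order being irrelevant because $2\dot\alpha\notin\dot R^\times$ forces $(\dot\alpha+\lambda)+(\dot\alpha+\mu)\notin R^\times$ and Consequence~\ref{ConsSt21} applies. Additivity (StF1) then reduces to St1 combined with this commutativity. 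For StF2 with $\dot\alpha+\dot\beta\in\dot R^\times$, the crucial point is that the cocycle twist $\sigma_{i\delta_\alpha,j\delta_\beta}$ in (\ref{eq:DefSt22new}) is exactly the value of the product $c_{\delta_\alpha}^ic_{\delta_\beta}^j=\sigma_{i\delta_\alpha,j\delta_\beta}c_{i\delta_\alpha+j\delta_\beta}$ in $\bbbc_\sigma$, so applying $\chi$ to both sides of StF2 yields the St2 identity in $\St_{R,\dot R,\sigma}(\bbbc)$, with indices $(i,j)$ for which $i\alpha+j\beta\notin R^\times$ collapsing to $1$ on both sides (by definition of $\chi$ and Consequence~\ref{ConsSt21}, respectively). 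The rank-one axiom StF2$'$ matches St2$'$ after comparing (\ref{eq:finiteLWA})--(\ref{eq:finiteWF}) with (\ref{eq:LWA})--(\ref{eq:WF}). Surjectivity is immediate from $x_\alpha(t)=\chi(\hat x_{\dot\alpha}(tc_{\delta_\alpha}))$ for $\alpha=\dot\alpha+\delta_\alpha\in R^\times$.

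For the right-split I set $\phi(x_\alpha(t)):=\hat x_{\dot\alpha}(tc_{\delta_\alpha})$ on generators, so $\phi$ respects St1 trivially. Checking St2 amounts to computing $(\phi(x_\alpha(s)),\phi(x_\beta(t)))$ via StF2 as a product indexed by $\dot R^+_{\dot\alpha,\dot\beta}$, and comparing with the image under $\phi$ of the St2 right-hand side, indexed by $R^+_{\alpha,\beta}$; the $\sigma$-factors match by construction of (\ref{eq:DefSt22new}), and the ``surplus'' factors appearing only on the StF2 side correspond precisely to $(i,j)$ with $i\dot\alpha+j\dot\beta\in\dot R^\times$ but $i\alpha+j\beta\notin R^\times$, which all lie in $\kappa_\chi$. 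Using the classification of nilpotent-pair subsystems in Proposition~\ref{affine-99} (simply-laced, $B_2$, and $G_2$ cases) I would verify that these surplus contributions can be consistently absorbed into $\kappa_\chi$ so that $\phi$ is a genuine homomorphism; then $\chi\circ\phi=\mathrm{id}$ is clear on generators, so the sequence (\ref{eq:ProStIso000}) is right-split and the semidirect-product decomposition (\ref{eq:ProStIso0001}) follows by the standard correspondence between right-split exact sequences and semidirect products.

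For the ``in particular'' statement, when $R=\tilde R$ the non-isotropic roots are exactly $\dot R_{sh}+\la S\ra=\dot R_{sh}+\Lambda$ and $\dot R_{lg}+\la L\ra$, and the structural data $S_{\alpha,\beta}$ and $L_{\alpha,\beta}$ listed in Proposition~\ref{affine-99} force every index $(i,j)$ contributing to a nilpotent-pair StF2-product to already land in $\tilde R^\times$. Consequently no surplus terms survive, $\phi$ becomes a two-sided inverse of $\chi$ on generators, $\kappa_\chi$ is trivial, and $\chi$ is an isomorphism. The hardest step I anticipate is precisely the treatment of those surplus commutator factors in the general-$R$, non-simply-laced setting: one must track the cocycle twists from (\ref{eq:DefSt22new}) against the lattice-coset constraints imposed by the semilattices $S$ and $L$, handled separately for each nilpotent-pair type enumerated in Proposition~\ref{affine-99}.
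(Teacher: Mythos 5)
Your construction of $\chi$ on generators, the cocycle bookkeeping $c_{\delta_\alpha}^{i}c_{\delta_\beta}^{j}=\sg_{i\delta_\alpha,j\delta_\beta}c_{i\delta_\alpha+j\delta_\beta}$ matching (\ref{eq:DefSt22new}), the rank-one comparison of (\ref{eq:finiteLWA})--(\ref{eq:finiteWF}) with (\ref{eq:LWA})--(\ref{eq:WF}), and the surjectivity argument all agree with the paper's proof (which handles rank $\geq 2$ by citing Kryliouk and checks rank one directly), and your section $\phi$ is exactly the paper's $\chi_r^{-1}$. The gap is in your justification that $\phi$ is a homomorphism. You propose that the ``surplus'' factors arising on the $\St_{\dot{R}}(\bbbc_{\sigma})$ side ``lie in $\kappa_{\chi}$'' and can be ``absorbed into $\kappa_{\chi}$''. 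No such absorption is available: for $\phi$ to be a group homomorphism, the defining relations of $\St_{R,\dot{R},\sigma}(\bbbc)$ must be satisfied \emph{exactly} by the proposed images inside $\St_{\dot{R}}(\bbbc_{\sigma})$. If they only hold modulo $\kappa_{\chi}$, you have merely produced a map into $\St_{\dot{R}}(\bbbc_{\sigma})/\kappa_{\chi}$, i.e.\ nothing beyond $\chi$ itself, and no splitting of (\ref{eq:ProStIso000}).

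Moreover, the difficulty is not where you locate it. For a genuine nilpotent pair $\{\a,\b\}$ the semilattice inclusions $S+L\sub S$, $L+2S\sub L$ (and $L+3S\sub L$ in type $G_2$) --- the same inclusions used in Proposition~\ref{affine-99} --- show that $i\a+j\b\in R^{\times}$ whenever $i\dot\a+j\dot\b\in\dot{R}^{\times}$, so the index sets in (St2) and (StF2) coincide and no surplus commutator factors occur there. The relations that actually threaten the section are the commutation relations of Consequence~\ref{ConsSt21}: take $\a=\dot\a+\d_\a$, $\b=\dot\b+\d_\b$ in $R^{\times}$ with $\dot\a+\dot\b\in\dot{R}^{\times}$ but $\a+\b\notin R^{\times}$ (e.g.\ in type $B_2$, two short roots with $\d_\a+\d_\b$ outside the long-root semilattice). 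Then $x_\a(s)$ and $x_\b(t)$ commute in $\St_{R,\dot{R},\sigma}(\bbbc)$, while $(\hat{x}_{\dot\a}(sc_{\d_\a}),\hat{x}_{\dot\b}(tc_{\d_\b}))$ is a nontrivial element of $\St_{\dot{R}}(\bbbc_{\sigma})$ (it has nontrivial image in the Chevalley group over the fraction field of $\bbbc_{\sigma}$), so the proposed assignment violates this relation; your proposal never addresses these pairs, and the absorption device cannot repair this. The same caveat affects your last step: for $R=\tilde{R}$ with nonzero twist number one has $\la L\ra\subsetneq\la S\ra$, so $\dot\a+\dot\b\in\dot{R}$ does \emph{not} force $\a+\b\in\tilde{R}$, and the claim that ``no surplus terms survive'' needs a genuine argument in the non-simply-laced case rather than the lattice observation you give. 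Any complete proof of right-splitness must confront precisely these commutation relations, which your outline leaves untouched.
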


\proof 
Define 
\begin{equation}\label{eq:ProStIso00}
\St_{\dot{R}}(\bbbc_{\sigma})\stackrel{\chi}{\rightarrow}\St_{R,\dot{R},\sigma}(\bbbc),
\end{equation}
where 
\begin{equation}\label{eq:ProStIso01}
\chi(x_{\dot{\alpha}}(\Sigma_{\delta\in\Lambda}t_{\delta}c_{\delta})):=\prod_{\delta\in\Lambda}x_{\dot{\alpha}+\delta}(t_{\delta}).
\end{equation}
Note that (\ref{eq:ProStIso01}) makes sense {since only for finitely many $\d\in\Lam$, the scalars $t_\d$
	are non-zero}.\ {Note that by convention if $\delta\in\Lambda$ and $\dot{\alpha}+\delta\not\in R^{\times}$ we define $x_{\dot{\alpha}+\delta}(t_{\delta})=1$.}\ We also note that the right hand side of (\ref{eq:ProStIso01}) does not depend on the order of its terms {by Consequence~\ref{ConsSt21}} and that $R$ is reduced.

1) When $\text{rank}(R)\geq2$, by a similar argument as in \cite[Proposition 3.2.11]{Kry95} the assignment $\chi$ defines a group homomorphism.

2) Let $\text{rank}(R)=1$.\ It is enough to show that
\begin{equation}\label{eq:ProStIso03}
\chi(n_{\dot{\alpha}}(tc_{\delta}))\chi(x_{\dot{\alpha}}(rc_{\delta}))\chi(n_{\dot{\alpha}}(-tc_{\delta}))=\chi(x_{-\dot{\alpha}}(\sigma(\delta,-\delta)^{-1}t^{-2}rc_{-\delta})),
\end{equation}
for $\delta\in\Lambda$, $\dot{\alpha}\in\dot{R}$, $t\in \bbbc^{\ast}$ and $r\in \bbbc$.\ But by the definition of $\chi$, for any non-isotropic root $\alpha=\dot{\alpha}+\delta\in R^{\times}$ and any $t\in \bbbc^{\ast}$, the element $\hat{n}_{\alpha}(t)\in\St_{\dot{R},\Lambda,\sigma}(k)$ corresponds to the element $n_{\dot{\alpha}}(c_{\delta}t)\in\St_{\dot{R}}(\bbbc_{\sigma})$.\ Hence the proposition
follows in rank one by the definition of $\chi$ (see (\ref{eq:ProStIso01})) and St$\text{2}'$ in Definition~\ref{DefSt}.\\
In either case,	$\chi$ is onto as for $\a=\dot\a+\d_\a\in R^\times$ and $t\in k^*$, $\chi(x_{\dot\a}(tc_{\d_{\a}}))=x_\a(t)$.\\
Furthermore, in both cases, similar to the way $\chi$ is defined, its right-inverse $\chi_{r}^{-1}$ can be defined by the following assignment:
\begin{equation}\label{eq:ProStIso02}
\chi_{r}^{-1}(x_{\dot{\alpha}+\delta}(t))=x_{\dot{\alpha}}(tc_{\delta}).
\end{equation}
For $\rt$, the map $\chi_{r}^{-1}$ becomes onto, in particular $\chi$ for $\tilde{R}$ is an isomorphism.
\qed

\begin{rem}\label{test000}
	Let $R=R(\rd,S,L)=R(\rd',S',L')$ be two descriptions of $R$ in the form (\ref{eq99-1}). Let $\psi$ be an isomorphism of
	$R$ with $\psi(\rd)=\rd'$. The isomorphism $\psi_{|_\Lam}:\Lam\rightarrow\Lam$ induces a $2$-cocycle
	$\sg':=\psi^{\ast}\sg$.\ Then we have the commutative diagram
	\begin{equation}
	\xymatrix{
		&\St_{\dot{R}}(\bbbc_{\sigma}) \ar@{>}[d]_{\chi}\ar@{>}[r]^{\hat{\psi}}
		&\St_{\dot{R}'}(\bbbc_{\sg'})  \ar@{>}[d]^{\chi'}\\
		&\St_{R,\dot{R},\sigma}(\bbbc) \ar@{>}[r]^{\hat\psi}&\St_{R,\dot{R}',\sigma'}(\bbbc)
	}
	\end{equation}
	where the upper and lower $\hat\psi$'s are the isomorphisms induced by $\psi$, namely
	$x_{\dot{\a}}(c_\d)\mapsto x_{\dot\a'}(c_{\d'})$ and $x_{\a}(t)\mapsto x_{\a'}(t)$ respectively, where
	$\dot\a'=\psi(\dot\a)$, $\d'=\psi(\d)$ and $\a'=\psi(\a)$, for $\dot\a\in\rd$, $\d\in\Lam$ and $\a\in R$.
\end{rem}

We call the Steinberg group associated to $\tilde{R}$, the {\it universal Steinberg group} associated to $R$.

Let $N$ denote the subgroup of $\St_{R,\dot{R},\sigma}(\bbbc)$
generated by all $n_{\alpha}(t)$ where $\alpha\in R^{\times}$ and $t\in \bbbc^{\ast}$. I.e., 
\begin{equation}\label{eq:Polsg00}
N:=\langle n_{\alpha}(t)~|~\alpha\in R^{\times},~~t\in \bbbc^{\ast}\rangle. 
\end{equation}
Define 
\begin{equation}\label{eq:TorEl00}
h_{\alpha}(t):=n_{\alpha}(t)n_{\alpha}(1)^{-1}~~~\text{for}~~t\in \bbbc^{\ast},
\end{equation}
\begin{equation}\label{eq:TorEl01}
T:=\langle h_{\alpha}(t)~|~\alpha\in R^{\times},~~t\in \bbbc^{\ast}\rangle\subset N. 
\end{equation}

\begin{lem}\label{LemTNSGN}
	In the above notations, we have $T\unlhd N$.
\end{lem}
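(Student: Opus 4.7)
The plan is to verify that conjugation by each generator of $N$ preserves $T$, which will establish $T\unlhd N$. The strategy is to transport the analog of relation \textbf{StF5} of Lemma~\ref{LemStFrExtRelat} into $\St_{R,\dot R,\sigma}(\bbbc)$, using the epimorphism $\chi:\St_{\dot R}(\bbbc_\sigma)\twoheadrightarrow\St_{R,\dot R,\sigma}(\bbbc)$ supplied by Proposition~\ref{ProStIso}.

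Unwinding the defining assignment $\chi(\hat x_{\dot\alpha}(tc_{\delta_\alpha}))=x_\alpha(t)$ together with the formulas (\ref{eq:finiteWF}), (\ref{eq:WF}) for $\hat n_{\dot\alpha}$ and $n_\alpha$, a direct computation shows $\chi(\hat n_{\dot\alpha}(tc_{\delta_\alpha}))=n_\alpha(t)$ and $\chi(\hat h_{\dot\alpha}(tc_{\delta_\alpha}))=h_\alpha(t)$ for every $\alpha=\dot\alpha+\delta_\alpha\in R^\times$. Hence $\chi$ restricts to surjections $\hat N\twoheadrightarrow N$ and $\hat T\twoheadrightarrow T$. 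Since the image of a normal subgroup under a surjective homomorphism is normal in the image, it is enough to prove $\hat T\unlhd\hat N$ inside $\St_{\dot R}(\bbbc_\sigma)$. For this I invoke Lemma~\ref{LemStFrExtRelat}: as a twisted group algebra of the torsion-free abelian group $\Lambda$ over $\bbbc$, the coordinate ring $\bbbc_\sigma$ is always a domain of characteristic zero, and by the standing convention adopted just after the construction of $\bbbc_\sigma$, the cocycle $\sigma$ is symmetric whenever $R$ is not of type $A_n$, so $\bbbc_\sigma$ is in addition commutative in those cases. Relation \textbf{StF5} then gives
$$\hat n_{\dot\alpha}(a)\hat h_{\dot\beta}(b)\hat n_{\dot\alpha}(a)^{-1}=\hat h_{w_{\dot\alpha}\dot\beta}(ca^{-\langle\dot\beta,\dot\alpha\rangle}b)\,\hat h_{w_{\dot\alpha}\dot\beta}(ca^{-\langle\dot\beta,\dot\alpha\rangle})^{-1}\in\hat T$$
for every generator, so $\hat T\unlhd\hat N$, and pushing forward through $\chi$ yields $T\unlhd N$.

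The only obstacle is the simply-laced type $A_n$, in which the cocycle $\sigma$ is not assumed to be commutative. Here the remedy is that the Chevalley commutator relation \textbf{StF2} degenerates to $(\hat x_{\dot\alpha}(a),\hat x_{\dot\beta}(b))=\hat x_{\dot\alpha+\dot\beta}(\pm ab)$, and relations \textbf{StF3}--\textbf{StF6} can then be derived one rank-one computation at a time over an arbitrary associative ring, without appealing to a field of fractions. Alternatively, one may invoke \cite[Theorem 40.22(v\&iv)]{Tits02}, quoted in the discussion following the definition of $\bbbc_\sigma$, to replace $\sigma$ by a cohomologous symmetric cocycle without changing the underlying group structure of $\St_{\dot R}(\bbbc_\sigma)$. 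Either way, the hypotheses of Lemma~\ref{LemStFrExtRelat} (or its simply-laced analog) are met in all types, so the reduction above completes the proof.
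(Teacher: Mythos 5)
Your main line is exactly the paper's: identify $N$ and $T$ as the images under $\chi$ (Proposition~\ref{ProStIso}) of $\hat N$ and $\hat T$, get $\hat T\unlhd\hat N$ from Lemma~\ref{LemStFrExtRelat} (\textbf{StF5}) because $\bbbc_{\sigma}$ is a commutative domain of characteristic zero under the standing commutativity assumption on $\sigma$, and push normality forward through the epimorphism. The only divergence is type $A_n$, where $\sigma$ may be noncommutative: the paper disposes of this case by citing \cite[Proposition 3.3.1(i)]{Kry95}, whereas you offer two remedies. The first remedy (deriving the Weyl-element conjugation relations by rank-one, Milnor-type computations valid over an arbitrary associative ring, using that in type $A$ the commutator formula \textbf{StF2} has a single term) is sound and is in substance what the cited result of Kryliouk supplies, though you only sketch it and the formulas must be stated with care about left/right multiplication. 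The second remedy is not correct: a $2$-cocycle on $\Lambda\cong\bbbz^{\nu}$ with values in $\bbbc^{\ast}$ is cohomologous to a symmetric one only when its alternation is trivial, i.e.\ only when $\bbbc_{\sigma}$ is already commutative; a genuinely noncommutative quantum torus cannot be ``symmetrized,'' and the appeal to \cite[Theorem 40.22]{Tits02} in the paper is a much weaker remark and does not assert this. Since you present the two remedies as alternatives and the first one (or the paper's citation) suffices, your argument goes through, but the cocycle-symmetrization claim should be removed.
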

\proof 
{When $\dot{R}$ is of type $A_n$ (for some $n\ge2$), this is \cite[Proposition 3.3.1(i)]{Kry95}.\ Otherwise,} by our standing assumption, $\sg$ is commutative
and hence $\bbbc_{\sigma}$ constitutes a commutative (unital) domain with characteristic zero.\ Now by Lemma~\ref{LemStFrExtRelat}, $\hat{T}$ is normal in $\hat{N}\subset\St_{\dot{R}}(\bbbc_{\sigma})$.\ Under the epimorphism $\chi$ in Proposition~\ref{ProStIso}, $N$ and $T$ are the images 
of $\hat{N}$ and $\hat{T}$ respectively. The proposition follows.\qed

\begin{DEF}\label{DefEAGR}{\rm
		In Definition~\ref{DefSt} if we add the following relations, the resulting 
		group $\G_{R,\dot{R},\sigma}(\bbbc)$ is called the {\it extended affine Kac--Moody group} associated to $(R,\dot{R},\sigma)$:
		\begin{description}
			\item[Tor] For every $\alpha\in R^{\times}$, $h_{\alpha}(t)$ is multiplicative in $t\in \bbbc^{\ast}$.
	\end{description}}
\end{DEF}

\begin{rem}\label{remIsoEAG}{\rm
		Note that since $\chi$ in Proposition~\ref{ProStIso} maps $\hat{h}_{\dot{\alpha}}(tc_{\delta})$ to
		$h_{\dot{\alpha}+\delta}(t)$, it can easily be checked that $\chi$ preserves the multiplicativity relation {\bf Tor} in Definition~\ref{DefEAGR}.\ Let $\tilde{R}$ be the $\Lam$-covering of $R$.\ Then
		\begin{equation}\label{eq:RemIsoEAG00}
		\G_{\dot{R}}(\bbbc_\sigma)\stackrel{\chi}{\cong}\G_{\tilde{R},\dot{R},\sigma}(\bbbc),
		\end{equation}
		where $\G_{\dot{R}}$ is the universal Chevalley group scheme.}	
\end{rem}

\subsection{Groups Associated to EALAs}\label{subsecGAELI}
Here we construct certain groups associated to EALAs by means of certain representations of
Lie algebras; {this method is known as  the ``Kac-Peterson method'' (see for example \cite{Kac85}), to achieve this 
	we follow closely \cite{Kry95}.}\

Let $\ll$ be an EALA with a reduced EARS $R=R(\dot{R},S,L)$.\ Let $G_{\text{nil}}$
($G_{\text{int}}$) be the group associated to $\ll$ with respect to nilpotent (integrable) representations of $\ll$.\ Similarly, let $G_{\text{nil,c}}$ ($G_{\text{int,c}}$) be the group associated to $\ll_{c}$ with respect to nilpotent (integrable) representations of $\ll_{c}$ where $\ll_{c}$
is the core of $\ll$ (see \cite[Section 3.2]{Kry95} {and \cite[Section 6.1]{MP95}} for details).\ To emphesize on the underlined filed we sometimes write $G(\bbbc)$ instead of $G$, for $G\in\{G_{\text{nil}},G_{\text{int}},\ldots\}$.

We generalize \cite[Proposition 3.2.29]{Kry95} for any EALA with a reduced EARS.\ {Recall our standing assumption that in the case $\dot{R}$ is not of type $A_n$ (for some $n\in\bbbn$) we always assume that {the $2$-cocycle} $\sigma$ is commutative.}
\begin{pro}\label{ProisoimageSt}
	Let $\ll$ be an EALA with a reduced EARS $R=R(\dot{R},S,L)$ and the coordinate ring $\bbbc_{\sigma}$.\ 
	Then for each $G\in\{G_{\text{nil}},G_{\text{int}},G_{\text{nil,c}},G_{\text{int,c}}\}$ there exist epimorophisms $\Pi_{G}$ and $\pi_{G}$ such that the following diagram commutes.
	\begin{equation}  \label{eq:ProisoimageSt00}
	\xymatrix{
		&\St_{\dot{R}}(\bbbc_{\sigma}) \ar@{>}[d]_{\chi}\ar@{>}[r]^{\Pi_{G}}
		&G \\
		&\St_{R,\dot{R},\sigma}(\bbbc)\ar@{>}[ru]_{\pi_{G}}}
	\end{equation}
\end{pro}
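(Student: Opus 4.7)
The plan is to define $\pi_G$ directly on the generators $x_\a(t)$ of $\St_{R,\rd,\sg}(\bbbc)$ by exponentiating root vectors in $\ll$, verify that the defining relations \textbf{St1} and \textbf{St2}/\textbf{St2}$'$ are respected in $G$, and then set $\Pi_G := \pi_G \circ \chi$, with $\chi$ the epimorphism of Proposition~\ref{ProStIso}. Commutativity of the diagram (\ref{eq:ProisoimageSt00}) is then built in, and surjectivity of both $\pi_G$ and $\Pi_G$ reduces to the standard fact that $G$ is generated by its non-isotropic one-parameter subgroups together with the known surjectivity of $\chi$.

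To set up $\pi_G$, fix, for every $\a \in R^\times$, a nonzero $e_\a \in \ll_\a$, chosen so that on each finite simple subalgebra $\ll_T$ of Lemma~\ref{lem99} the $\{e_\a\}$ form a Chevalley basis and so that the isotropic shifts multiply according to $\bbbc_\sg$: explicitly, $[e_{\dot\a+\d_\a},\,e_{\dot\b+\d_\b}] = \sg(\d_\a,\d_\b)\,N_{\dot\a,\dot\b}\,e_{\dot\a+\dot\b+\d_\a+\d_\b}$ whenever $\dot\a+\dot\b \in \rd^\times$, with $N_{\dot\a,\dot\b}$ the classical Chevalley constants. Axiom (A3) makes $\ad(e_\a)$ locally nilpotent, so for each of $G_{\text{nil}}, G_{\text{int}}, G_{\text{nil,c}}, G_{\text{int,c}}$ the element $\exp(t\,e_\a)$ is defined; put $\pi_G(x_\a(t)) := \exp(t\,e_\a)$. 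Relation \textbf{St1} is immediate since $e_\a$ commutes with itself. For \textbf{St2}, given a nilpotent pair $\{\a,\b\}$, Lemmas~\ref{lemr2srs}--\ref{lem99} identify $\ll_{\a,\b}=\ll_{R_{\a,\b}}$ with a finite-dimensional simple Lie algebra, and inside this subalgebra the classical Chevalley commutator formula $(\exp(se_\a),\exp(te_\b)) = \prod \exp(c_{ij} s^i t^j e_{i\a+j\b})$ holds with the $c_{ij}$ of \cite[Lemma 15]{St67}; iterating the bracket identity above together with the cocycle axiom for $\sg$ accumulates precisely the factor $\sg_{i\d_\a,j\d_\b}$ of (\ref{eq:DefSt22new}) in front of each $e_{i\a+j\b}$. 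Relation \textbf{St2}$'$ reduces, inside the three-dimensional simple subalgebra generated by $e_\a$ and $e_{-\a}$, to the standard $\SL$ computation; the normalization $[e_\a,e_{-\a}] = \sg(\d_\a,-\d_\a)\,h_{\dot\a}$ is exactly what produces the factor $\sg(\d_\a,-\d_\a)^{-1}$ in (\ref{eq:LWA}) and (\ref{eq:WF}).

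Once $\pi_G$ is known to be a well-defined homomorphism, put $\Pi_G := \pi_G \circ \chi$. By (\ref{eq:ProStIso01}) we have $\Pi_G(\hat x_{\dot\a}(tc_\d)) = \exp(t\,e_{\dot\a+\d})$, which is the natural candidate for a direct definition of $\Pi_G$; the composition realization makes commutativity of (\ref{eq:ProisoimageSt00}) and surjectivity of $\Pi_G$ automatic. The main obstacle is the structure-constant bookkeeping in \textbf{St2} when $R_{\a,\b}$ is not simply laced (types $B_2$ and $G_2$): one must invoke Proposition~\ref{affine-99} to read off the isotropic components of the higher roots $i\a+j\b$, verify that the iterated brackets $[\ldots[[e_\a,e_\b],e_\a],\ldots]$ collect the cocycle product $\sg(i\d_\a,j\d_\b)\prod_{k=1}^{i-1}\sg(k\d_\a,\d_\a)\prod_{k=1}^{j-1}\sg(\d_\b,k\d_\b)$, and confirm that the standing assumption of commutativity of $\sg$ outside type $A$ removes any ordering ambiguity in the commutator-formula product, in accordance with Remark~\ref{RemDStWD}.
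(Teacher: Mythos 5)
Your proposal is correct and follows essentially the same route as the paper: both define the maps by sending $x_\a(t)$ (resp.\ $\hat{x}_{\dot\a}(tc_\d)$) to exponentials of root vectors under the induced nilpotent/integrable representations, verify \textbf{St1} and \textbf{St2} inside the finite-dimensional simple subalgebras $\ll_{\a,\b}$ of Lemma~\ref{lemr2srs} (with the cocycle factor $\sg_{i\d_\a,j\d_\b}$ absorbed into the structure constants), check \textbf{St2}$'$ by an $\SL$ computation, and deduce surjectivity from the fact that these exponentials generate $G$. The only cosmetic difference is that you obtain $\Pi_G$ as $\pi_G\circ\chi$, whereas the paper defines $\Pi_G$ directly by $\hat{x}_{\dot\a}(tc_\d)\mapsto\bar{\rho}(\exp(tc_\d X_{\dot\a}))$ and reads off the commutativity of the diagram from the definition of $\chi$.
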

\proof 
When $\dot{R}=A_n$ (for some $n\geq2$) this is \cite[Proposition 3.2.29]{Kry95}.\ Assume that $\dot{R}$ is not of type $\dot{R}=A_n$ (for any $n\geq2$).

For every $ \dot{\alpha}\in\dot{R}^\times$ and $\delta\in\Lambda$ and any nilpotent representation $\rho$ of $\ll$ define
\begin{equation}\label{eq:ProisoimageSt01}
\Pi^{\rho}_{G_{\text{nil}}}(\hat{x}_{\dot{\alpha}}(tc_{\delta}))=\bar{\rho}(\exp(tc_{\delta}X_{\dot{\alpha}})),
\end{equation}
and similarly for $ \alpha=\dot{\alpha}+\delta_{\alpha}\in R^{\times}$, 
\begin{equation}\label{eq:ProisoimageSt02}
\pi^{\rho}_{G_{\text{nil}}}(x_{\a}(t))=\bar{\rho}(\exp(tc_{\delta_{\alpha}}X_{\dot{\alpha}})),
\end{equation}
where $X_{\dot{\alpha}}$ is chosen as in \cite[Lemma 15]{St67} associated to $\dot{\alpha}\in\dot{R}^\times$
and $\bar{\rho}$ is the induced representation of $G_{\text{nil}}$ corresponding to $\rho$,
and $t\in k$.\ If $\Pi^{\rho}_{G_{nil}}$ and $\pi^{\rho}_{G_{nil}}$ define homomorphisms, then 
clearly by their definitions and the definition of $\chi$
{in Proposition~\ref{ProStIso}}, the diagram (\ref{eq:ProisoimageSt00}) is commutative.\ To 
show that $\Pi^{\rho}_{G_{nil}}$ and $\pi^{\rho}_{G_{nil}}$ are homomorphisms, it suffices to check that the defining relations in $\St_{\dot{R}}(\bbbc_{\sigma})$ {and $\St_{R,\dot{R},\sigma}(k)$} hold for the image of elements in $G_{\text{nil}}$ via $\Pi^{\rho}_{G_{nil}}$ and $\pi^{\rho}_{G_{nil}}$ {respectively}.\ It is clear that for such elements {(St1)} in Definition~\ref{DefSt} holds.\

1) Assume $\text{rank}(\dot{R})\ge2$.
If $\alpha+\beta\in R^{\times}$, then, by Lemma~\ref{lemr2srs}, $R_{\alpha,\beta}$ is an irreducible reduced finite 
root system and $\ll_{\alpha,\beta}$ is a finite-dimensional simple Lie algebra.\ Let $(\bbbc_{\sigma})$ be the fraction field of $\bbbc_{\sigma}$.\ By \cite[Lemma 15]{St67}, {relation (St2) of Definition \ref{DefSt}} holds in the Lie algebra $\ll_{\alpha,\beta}$ over $(\bbbc_{\sigma})$.\ {Hence (St2)} also holds for coefficients from $\bbbc_{\sigma}\subset(\bbbc_{\sigma})$.\ Note that in the case of $\pi^{\rho}_{G}$, this argument is still valid since for every pair of nilpotent roots $\a$ and $\beta$, $R_{\a,b}$ is a finite root system by Lemma~\ref{lemr2srs} therefore all of its non-zero roots are non-isotropic and hence none of them is in the kernel of $\pi^{\rho}_{G}$.\

2) Assume $\text{rank}(\dot{R})=1$.
For $\alpha=\dot{\alpha}+\delta_{\alpha}\in R^{\times}$ by (\ref{eq:ProisoimageSt01}) we have
\begin{equation}\label{eq:ProisoimageSt03}
\Pi^{\rho}_{G_{\text{nil}}}(\hat{x}_{\dot{\alpha}}(tc_{\delta_{\alpha}}))=\bar{\rho}\left(\left(
\begin{array}{cc}
1 & tc_{\delta_{\alpha}} \\
0 & 1 \\
\end{array}
\right)\right),
\end{equation}
for $t\in \bbbc$ and  
\begin{equation}\label{eq:ProisoimageSt04}
\Pi^{\rho}_{G_{\text{nil}}}(\hat{n}_{\dot{\alpha}}(tc_{\delta_{\alpha}}))=\bar{\rho}\left(\left(
\begin{array}{cc}
0 & tc_{\delta_{\alpha}} \\
-\sigma(\delta_{\alpha},-\delta_{\alpha})^{-1}t^{-1}c_{-\delta_{\alpha}} & 0 \\
\end{array}
\right)\right),
\end{equation} 
for $t\in \bbbc^{\ast}$.\ Now, it is a simple matrix calculation to see Definition~\ref{DefSt}(St$\text{2}'$) holds as well. The same argument is valid for $\pi^{\rho}_{G_{\text{nil}}}$. Moreover, elements of the form {$\bar\rho (\exp(tc_{\delta_{\alpha}}X_{\dot{\alpha}}))$} generate $G_{\text{nil}}$.\ Hence 
$\Pi^{\rho}_{G_{\text{nil}}}$ (similarly $\pi^{\rho}_{G_{\text{nil}}}$) induces an
epimorphism.
Since the above argument is independent of the choice of representation and it is
valid also for any $G\in\{G_{\text{nil}},G_{\text{int}},G_{\text{nil,c}},G_{\text{int,c}}\}$, the proposition follows.\qed

For a non-isotropic root $\a\in R^{\times}$, let $G_{\ast}^{\a}(\bbbc)$ be the group in 
$G_{\ast}(\bbbc)$ generated by {$\mathfrak{U}_{\pm\a}(\bbbc):=\langle\Pi_{G_{\ast}}(\hat{x}_{\pm\dot{\alpha}}(tc_{\pm\delta}))~|~t\in\bbbc\rangle$ ($\Pi_{G_{\ast}}$ as in Proposition~\ref{ProisoimageSt})} where 
$$G_{\ast}(\bbbc)\in\{G_{\text{nil}}(\bbbc),G_{\text{int}}(\bbbc),G_{\text{nil,c}}(\bbbc),G_{\text{int,c}}(\bbbc)\}.$$
For such a non-isotropic root $\alpha$ we also know that
$\mathfrak{sl}^{\a}_{2}:= \ll_{\alpha}+[\ll_{\a},\ll_{-\a}]+\ll_{-\alpha}$ is a copy of $\mathfrak{sl}_{2}$ with a standard basis $\{e_{\alpha},h_{\alpha},f_{\alpha}\}$.\

The following is a generalization of \cite[Proposition 3.2.30]{Kry95}.
\begin{pro}\label{ProSL2mod}
	Let $\ll$ be an EALA with a reduced EARS $R$ over $\bbbc$.\ 
	Let $(\rho,M)$ be an integrable $\ll$-module.\ Then the following hold.
	\begin{itemize}
		\item [(i)] For
		every non-isotropic root $\a\in R^{\times}$ there exists an epimorphism 
		\begin{equation}\label{eq:ProSL2modi00}
		\varphi_{\rho}^{\a}:\SL(\bbbc)\to\bar{\rho}(G_{\ast}^{\a}(\bbbc)),
		\end{equation}
		such that
		\begin{equation}\label{eq:ProSL2modi01}
		\varphi_{\rho}^{\a}\left(\left(
		\begin{array}{cc}
		1 & t \\
		0 & 1 \\
		\end{array}
		\right)\right)=\bar{\rho}(\exp te_{\a}),~~~\varphi_{\rho}^{\a}\left(\left(
		\begin{array}{cc}
		1 & 0 \\
		t & 1 \\
		\end{array}
		\right)\right)=\bar{\rho}(\exp tf_{\a}),
		\end{equation}
		for all $t\in \bbbc$, where $(\bar{\rho},M)$ is the representation induced by $(\rho,M)$ of $G_{\ast}(\bbbc)\in\{G_{\text{int}}(\bbbc),G_{\text{int,c}}(\bbbc)\}$. 
		\item [(ii)] Assume either $\text{rank}(R)\geq2$ or the nullity of $\ll$ is 1.\ Then there exists a unique isomorphism 
		\begin{equation}\label{eq:ProSL2modii00}
		\varphi^{\a}:\SL(\bbbc)\to G_{\ast}^{\a}(\bbbc),
		\end{equation}	
		such that for all $t\in \bbbc$
		\begin{equation}\label{eq:ProSL2modii01}
		\varphi^{\a}\left(\left(
		\begin{array}{cc}
		1 & t \\
		0 & 1 \\
		\end{array}
		\right)\right)=\exp te_{\a},~~~\varphi^{\a}\left(\left(
		\begin{array}{cc}
		1 & 0 \\
		t & 1 \\
		\end{array}
		\right)\right)=\exp tf_{\a},
		\end{equation}
		and
		\begin{equation}\label{eq:ProSL2modii03}
		\bar{\rho}\circ\varphi^{\a}=\varphi_{\rho}^{\a},
		\end{equation}
		where $G_{\ast}(\bbbc)\in\{G_{\text{int}}(\bbbc),G_{\text{int,c}}(\bbbc)\}$.
	\end{itemize} 
\end{pro}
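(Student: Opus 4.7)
The plan is to integrate the $\mathfrak{sl}_{2}$-triple $\{e_{\a},h_{\a},f_{\a}\}$ spanning $\mathfrak{sl}_{2}^{\a}$ to a homomorphism out of $\SL(\bbbc)$, exploiting that $\SL(\bbbc)$ is the simply connected complex Lie group with Lie algebra $\mathfrak{sl}_{2}$. The essential input is already supplied by the standing hypotheses: axiom (A3) guarantees that $\ad(e_{\a})$ and $\ad(f_{\a})$ act locally nilpotently on $\ll$, and for any integrable representation $(\rho,M)$ the operators $\rho(e_{\a})$ and $\rho(f_{\a})$ also act locally nilpotently, so that $\exp(t\rho(e_{\a}))$ and $\exp(t\rho(f_{\a}))$ make sense in $\GL(M)$ for every $t\in\bbbc$.

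For part (i), the restriction of $\rho$ to $\mathfrak{sl}_{2}^{\a}$ is thus a locally finite $\mathfrak{sl}_{2}$-module. By the standard integration theorem on the simply connected group $\SL(\bbbc)$, there is a unique group homomorphism $\varphi_{\rho}^{\a}:\SL(\bbbc)\to\GL(M)$ whose differential agrees with $\rho|_{\mathfrak{sl}_{2}^{\a}}$, and which therefore satisfies the exponential formulas in (\ref{eq:ProSL2modi01}). To see that the image lies in $\bar{\rho}(G_{\ast}^{\a}(\bbbc))$ and is onto, recall that $G_{\ast}^{\a}(\bbbc)$ is generated by $\mathfrak{U}_{\pm\a}(\bbbc)$; after normalizing $e_{\a}\in\ll_{\a}$ and $f_{\a}\in\ll_{-\a}$ consistently with the Chevalley generators $X_{\pm\dot\a}$ and the cocycle $\sg$ (as in Proposition~\ref{ProisoimageSt}), the upper and lower unipotent subgroups of $\SL(\bbbc)$ are mapped precisely onto $\mathfrak{U}_{\pm\a}(\bbbc)$, giving surjectivity.

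For part (ii), the goal is to lift these $\varphi_{\rho}^{\a}$ to a single homomorphism $\varphi^{\a}:\SL(\bbbc)\to G_{\ast}^{\a}(\bbbc)$ satisfying (\ref{eq:ProSL2modii01}) and (\ref{eq:ProSL2modii03}). Uniqueness is immediate from (\ref{eq:ProSL2modii01}) since those formulas already prescribe $\varphi^{\a}$ on the standard unipotent generators of $\SL(\bbbc)$, which generate the whole group. For existence, one defines $\varphi^{\a}$ on these unipotent generators by sending them to $\exp(te_{\a})$ and $\exp(tf_{\a})$ in $G_{\ast}^{\a}(\bbbc)$ and checks that the defining relations of $\SL(\bbbc)$ (namely the $\SL$-Chevalley/Bruhat identity governing the interaction of the two unipotent subgroups) transport to relations that hold in $G_{\ast}^{\a}(\bbbc)$; up to the cocycle correction this is exactly the content of \textbf{St$2'$} in Definition~\ref{DefSt} together with Proposition~\ref{ProisoimageSt}. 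Compatibility (\ref{eq:ProSL2modii03}) then holds on generators, and surjectivity follows as in part (i).

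The main obstacle is injectivity of $\varphi^{\a}$, and this is precisely where the hypothesis ``$\text{rank}(R)\geq 2$ or nullity of $\ll$ equals $1$'' is used. The strategy is to exhibit an integrable $\ll$-module $(\rho,M)$ for which the composition $\bar\rho\circ\varphi^{\a}=\varphi_{\rho}^{\a}$ is already injective. When the nullity is $1$, $\ll$ is of affine Kac--Moody type and the classical integrable highest-weight representations supply such a faithful module via the Kac--Peterson construction used to define $G_{\text{int}}$ and $G_{\text{int,c}}$. When $\text{rank}(R)\geq 2$, Lemma~\ref{lemr2srs} yields a nilpotent pair $\{\a,\b\}$ embedding $\mathfrak{sl}_{2}^{\a}$ into the finite-dimensional simple subalgebra $\ll_{\a,\b}\subset\ll_{c}$; any faithful finite-dimensional module of $\ll_{\a,\b}$ extends, through the Kac--Peterson machinery, to an integrable module of $\ll$ whose restriction to $\mathfrak{sl}_{2}^{\a}$ is faithful on $\SL(\bbbc)$. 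In either case $\varphi_{\rho}^{\a}$ separates points, forcing $\ker\varphi^{\a}=\{1\}$ and completing the proof.
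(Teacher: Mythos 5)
Your overall strategy---integrate the $\mathfrak{sl}_2$-triple, observe that surjectivity is easy, and isolate injectivity of $\varphi^{\a}$ as the place where the hypothesis ``$\text{rank}(R)\geq 2$ or nullity $1$'' enters---is consistent with the substance of \cite[Proposition 6.1.7]{MP95}, which is what the paper's proof actually invokes. However, two steps in your part (ii) do not hold up as written. First, your existence argument defines $\varphi^{\a}$ on the unipotent generators and then ``checks the defining relations of $\SL(\bbbc)$,'' asserting that these are essentially the content of \textbf{St2}$'$ together with Proposition~\ref{ProisoimageSt}. They are not: $\SL(\bbbc)$ is not presented by Steinberg-type relations on its two unipotent subgroups (the rank-one Steinberg group over $\bbbc$ surjects onto $\SL(\bbbc)$ with a large kernel), so verifying \textbf{St2}$'$-type identities in $G_{\ast}^{\a}(\bbbc)$ does not by itself produce a homomorphism out of $\SL(\bbbc)$. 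The correct mechanism---and the one the paper delegates to \cite[Proposition 6.1.7]{MP95}---is that $G_{\text{int}}$ (resp.\ $G_{\text{int,c}}$) is by construction controlled by the joint kernel of all integrable representations, so the compatible family $\varphi_{\rho}^{\a}$ from part (i) assembles into $\varphi^{\a}$ without appeal to any presentation of $\SL(\bbbc)$.

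Second, and more seriously, your injectivity argument for $\text{rank}(R)\geq 2$ rests on the claim that ``any faithful finite-dimensional module of $\ll_{\a,\b}$ extends, through the Kac--Peterson machinery, to an integrable module of $\ll$.'' Nothing in the paper or in the cited machinery provides such an extension: the only extension result available is Lemma~\ref{Lemintmodcorext}, which extends modules from the core $\ll_c$ to $\ll$, not from a rank-two subalgebra to $\ll$; and the issue is not cosmetic, since for a short root $\a$ in type $B_2$ every weight of the adjoint module pairs evenly with $\a^{\vee}$, so one genuinely needs integrable modules beyond the adjoint one, and their availability for an EALA of nullity $\geq 2$ is exactly what must be argued. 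You also misattribute the construction of the auxiliary root: Lemma~\ref{lemr2srs} does not ``yield'' a nilpotent pair; one must produce $\dot\b\in\rd$ with $\a+\dot\b\in R^{\times}$, which is precisely where the paper uses $\text{rank}(R)\geq 2$ and, in the non-simply-laced cases, the choice of $\dot\b$ with $\dot\a+\dot\b$ short to guarantee $\a+\dot\b\in R$. The paper then concludes by feeding the finite and affine subalgebras furnished by Lemma~\ref{lemafsubLi} into \cite[Proposition 6.1.7(ii)]{MP95}, rather than by extending modules from $\ll_{\a,\b}$; as it stands, your rank $\geq 2$ case is therefore incomplete.
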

\proof (i) Follows from the proof of \cite[Proposition 6.1.7(i)]{MP95}.\

(ii) First assume that $\text{rank}(R)\geq2$.\ In this case, let $\a=\dot{\a}+\delta_{\alpha}\in \dot{R}+R^{0}$.\ Since $\text{rank}(R)\geq2$, there exists $\dot{\b}\in\dot{R}$ such that $\dot{\b}\neq\pm\dot{\a}$
and $\dot{\a}+\dot{\b}\in\dot{R}$.\ Moreover, from the classification of finite root systems one can see that in the non-simply laced cases, $\dot\b$ can be chosen such that
$\dot\a+\dot\b\in\rds$. Therefore, in all cases $\a+\dot\b=\dot\a+\dot\b+\d_\a\in R^\times$ and so $\{\a,\dot{\beta}\}$ forms a nilpotent pair in $R$.
Hence, (ii) follows from Lemma~\ref{lemafsubLi} and \cite[Proposition 6.1.7(ii)]{MP95}.\ When the nullity 
is equal to one, $\ll$ is an affine Kac--Moody Lie algebra.\ Therefore, (ii) directly follows from
\cite[Proposition 6.1.7(ii)]{MP95}.\qed

For a non-isotropic root $\a\in R^{\times}$, let $\bar{h}_{\a}(t):=\pi_{G_{\text{int}}}(h_{\a}(t))$
for $t\in \bbbc^{\ast}$ where $h_{\a}(t))$ is as in {(\ref{eq:TorEl00})} and $\pi_{G_{\text{int}}}$ as in Proposition~\ref{ProisoimageSt}.\ Then we have the  following corollary.
\begin{cor}\label{Cormulttorel}
	For any non-isotropic root $\a\in R^{\times}$ we have
	\begin{equation}\label{eq:Cormulttorel}
	\bar{h}_{\a}(st)=\bar{h}_{\a}(s)\bar{h}_{\a}(t).
	\end{equation}	
\end{cor}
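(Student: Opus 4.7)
The plan is to push the desired multiplicativity down to the well-known matrix identity $\mathrm{diag}(s,s^{-1})\cdot\mathrm{diag}(t,t^{-1})=\mathrm{diag}(st,(st)^{-1})$ in $\SL(\bbbc)$, using the $\SL_2$-structure furnished by Proposition~\ref{ProSL2mod}. First observe that by (\ref{eq:WF}) the element $n_\a(t)$ is a word in $x_{\pm\a}(\bbbc)$, so $h_\a(t)=n_\a(t)n_\a(1)^{-1}$, and hence each of $\bar h_\a(s),\bar h_\a(t),\bar h_\a(st)$, lies inside the subgroup $G_{\text{int}}^\a(\bbbc)$.

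Assume first that $\text{rank}(R)\ge 2$ or that the nullity of $\ll$ equals $1$, so that Proposition~\ref{ProSL2mod}(ii) provides an isomorphism $\varphi^\a:\SL(\bbbc)\to G_{\text{int}}^\a(\bbbc)$ satisfying (\ref{eq:ProSL2modii01}). Combining (\ref{eq:ProisoimageSt02}) with (\ref{eq:ProSL2modii01}) shows that $\pi_{G_{\text{int}}}(x_{\pm\a}(t))=\bar\rho(\exp(tc_{\pm\delta_\a}X_{\pm\dot\a}))$ is the $\varphi^\a$-image of an upper (respectively lower) unipotent matrix with off-diagonal entry a fixed non-zero scalar multiple of $t$. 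A direct matrix computation in $\SL(\bbbc)$ then identifies the $\varphi^\a$-preimage of $\pi_{G_{\text{int}}}(n_\a(t))$ with $\bigl(\begin{smallmatrix}0&\mu t\\-(\mu t)^{-1}&0\end{smallmatrix}\bigr)$ for some non-zero scalar $\mu$ absorbing the Chevalley constant and the cocycle factor $\sigma(\delta_\a,-\delta_\a)^{-1}$ appearing in (\ref{eq:WF}). The constant $\mu$ is the same for every $t$, so when we form $n_\a(t)n_\a(1)^{-1}$ the $\mu$-factors cancel and the preimage of $\bar h_\a(t)$ is the plain diagonal matrix $\mathrm{diag}(t,t^{-1})$. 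Multiplicativity of $t\mapsto\mathrm{diag}(t,t^{-1})$ in $\SL(\bbbc)$ transfers back via $\varphi^\a$ to give (\ref{eq:Cormulttorel}).

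In the remaining case $\text{rank}(R)=1$ with nullity $\ge 2$, the isomorphism in Proposition~\ref{ProSL2mod}(ii) is unavailable, but part~(i) of the same proposition still furnishes, for each integrable representation $(\rho,M)$ of $\ll$, an epimorphism $\varphi_\rho^\a:\SL(\bbbc)\to\bar\rho(G_{\text{int}}^\a(\bbbc))$. Since $G_{\text{int}}$ is constructed so that an equality holds in $G_{\text{int}}$ precisely when it holds in $\bar\rho(G_{\text{int}})$ for every integrable $\rho$, it suffices to verify $\bar\rho(\bar h_\a(st))=\bar\rho(\bar h_\a(s))\bar\rho(\bar h_\a(t))$ for each such $\rho$, and the same $\SL_2$-computation applied to $\varphi_\rho^\a$ settles this.

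The delicate point I expect to be the main obstacle is the scalar bookkeeping in the second paragraph: one has to verify that the 2-cocycle term $\sigma(\delta_\a,-\delta_\a)^{-1}$ sitting inside $n_\a(t)$ enters the matrix preimage as a factor independent of $t$, so that it truly cancels upon forming $n_\a(t)n_\a(1)^{-1}$. Any $t$-dependence surviving this cancellation would twist the diagonal entry and break multiplicativity; showing that no such twist occurs is what pins the argument down to a straightforward $\SL_2$ calculation.
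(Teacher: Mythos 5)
Your proof is correct, and in the main case it coincides with the paper's: when $\text{rank}(R)\geq2$ or the nullity is $1$, the paper simply says the corollary "follows directly from Proposition~\ref{ProSL2mod}", i.e.\ from the $\SL_2$-isomorphism $\varphi^{\a}$ under which $\bar{h}_{\a}(t)$ corresponds to $\mathrm{diag}(t,t^{-1})$ -- exactly your second paragraph. Where you genuinely diverge is the residual case $\text{rank}(R)=1$, nullity $\geq2$. There the paper does not go through Proposition~\ref{ProSL2mod}(i) at all; it stays inside Proposition~\ref{ProisoimageSt} and computes with the explicit $2\times2$ matrices over the quantum torus $\bbbc_{\sigma}$ given in (\ref{eq:ProisoimageSt03})--(\ref{eq:ProisoimageSt04}) (adjusted for $\pi_{G_{\text{int}}}$ via (\ref{eq:ProisoimageSt02})): $\bar{n}_{\a}(t)$ is the image of the anti-diagonal matrix with entries $tc_{\delta_{\a}}$ and $-\sigma(\delta_{\a},-\delta_{\a})^{-1}t^{-1}c_{-\delta_{\a}}$, so $\bar{h}_{\a}(t)=\bar{n}_{\a}(t)\bar{n}_{\a}(1)^{-1}$ comes from $\mathrm{diag}(t,t^{-1})$ (using the standing commutativity of $\sigma$ in type $A_1$), and multiplicativity is immediate. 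Your alternative -- apply Proposition~\ref{ProSL2mod}(i) representation by representation and use that an identity holds in $G_{\text{int}}$ exactly when it holds under every integrable $\bar{\rho}$ -- is sound, since $G_{\text{int}}$ is defined by the intersection of the kernels of the integrable representations, and it has the virtue of making one $\SL_2(\bbbc)$ computation cover all cases uniformly. The price is the normalization you flag at the end: the anti-diagonality of the $\varphi_{\rho}^{\a}$-preimage of $\bar{n}_{\a}(t)$ (equivalently, that $c_{\delta_{\a}}X_{\dot{\a}}$ and $\sigma(\delta_{\a},-\delta_{\a})^{-1}c_{-\delta_{\a}}X_{-\dot{\a}}$ form an $\mathfrak{sl}_2$-pair, so that the lower unipotent coefficient in (\ref{eq:WF}) is the exact negative reciprocal of the upper one) must be checked, not just the $t$-independence of the cocycle factor; but this is precisely what formula (\ref{eq:ProisoimageSt04}) already encodes, so you could close that gap simply by citing it -- which is in effect what the paper does.
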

\proof When either $\text{rank}(R)\geq2$ or the EALA in question has nullity 1, the corollary
follows directly from Proposition~\ref{ProSL2mod}.\ When $\text{rank}(R)=1$ and the nullity of
$\ll$ is greater or equal to 2, then the  corollary follows from (\ref{eq:ProisoimageSt03}) and  
(\ref{eq:ProisoimageSt04}) in Proposition~\ref{ProisoimageSt} (more precisely, adjusted formula	for $\pi_{G_{\text{int}}}$, see (\ref{eq:ProisoimageSt02})).\qed

\begin{cons}\label{ConsEChevepi}
	In the above setting, for $G_{\text{int}}(\bbbc)$ there exists an epimorphism induced from $\pi_{G}$ in Proposition~\ref{ProisoimageSt}, namely:
	\begin{equation}  \label{eq:CorEChevepi00}
	\bar{\pi}_{ G_{\text{int}}}:\G_{R,\dot{R},\sigma}(\bbbc)\to G_{\text{int}}(\bbbc).
	\end{equation}	
\end{cons}
\proof It immediately follows from Corollary~\ref{Cormulttorel} that $\pi_{G}$ preserves {\bf Tor} in Definition~\ref{DefEAGR}, hence the result.\qed 

Similar to $\bar{h}_{\a}(t)$, let $\bar{x}_{\a}(t):=\pi_{G_{\text{int}}}(x_{\a}(t))$
for $t\in \bbbc$ and $\bar{n}_{\a}(t):=\pi_{G_{\text{int}}}(n_{\a}(t))$
for $t\in \bbbc^{\ast}$.\ Also we define $\bar{N}$ and $\bar{T}$ correspondingly.
\begin{cons}\label{Consnormalingr}
	$\bar{T}\unlhd\bar{N}$.
\end{cons}
\proof Follows directly from Lemma~\ref{LemTNSGN} and Proposition~\ref{ProisoimageSt}.\qed

\begin{cor}\label{Corliftweylaction}
	With notations as above we have:
	\begin{itemize}
		\item [(i)] When $\text{rank}(R)\geq2$, then
		\[
		\bar{n}_{\a}(t)\bar{x}_{\lambda}(u)\bar{n}_{\a}(t)^{-1}=\bar{x}_{w_{\a}(\lambda)}{(t^{-\left\langle \lam,\a\right\rangle}u)},
		\]
		where $\lambda=\pm\a$, $t\in \bbbc^{\ast}$ and $u\in \bbbc$.
		\item [(ii)] When $\text{rank}(R)=1$, then
		\[
		\bar{n}_{\a}(t)\bar{x}_{\lambda}(u)\bar{n}_{\a}(t)^{-1}={\bar{x}_{-\lam}}(-\sigma(\delta_{\alpha},-\delta_{\alpha})^{-1}{t^{-\left\langle \lam,\a\right\rangle }u),}
		\]
		where $\a=\dot{\a}+\delta_{\alpha}$, $\lambda=\pm\a$, $t\in \bbbc^{\ast}$ and $u\in \bbbc$.
		\item[(iii)]{ In general, if $\sg$ is commutative and $\a=\dot{\a}+\delta_{\a}$ and $\b=\dot{\b}+\delta_{\b}$ are both in $R^{\times}$, then
			\[
			\bar{n}_{\a}(t)\bar{x}_{\b}(u)\bar{n}_{\a}(t)^{-1}=\bar{x}_{w_{\a}(\b)}(\lam_{\alpha,\beta}{t^{-\left\langle \b,\a\right\rangle}u)},
			\]
			where 
			$$\lam_{\a,\b}=c({\dot\alpha},{\dot\beta})\cdot\sg({-\left\langle \b,\a\right\rangle}\d_\a,\d_\b)^{c({\dot\alpha},\dot{\beta})}\prod_{k=|c({\dot\alpha},{\dot\beta})|}^{|{-\left\langle \b,\a\right\rangle}-c({\dot\alpha},{\dot\beta})|}\sg(k\d_\a,\d_\a)^{c({\dot\alpha},{\dot\beta})}$$
			and $c({\dot\alpha},{\dot\beta})=\pm1$ only depends on ${\dot\alpha}$ and ${\dot\beta}$.}
	\end{itemize}
\end{cor}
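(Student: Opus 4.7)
My plan is to handle the three parts by separate techniques, each reducing to a computation in either $\SL_2(\bbbc)$ or the finite--type Steinberg group $\St_{\dot R}(\bbbc_\sigma)$.

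For (i), I would invoke Proposition~\ref{ProSL2mod}(ii), which under the hypothesis $\text{rank}(R)\geq 2$ provides an isomorphism $\varphi^\a : \SL_2(\bbbc)\to G^{\a}_{\text{int}}(\bbbc)$ sending the standard upper and lower unipotents to $\bar x_\a(t)=\exp t e_\a$ and $\bar x_{-\a}(t)=\exp t f_\a$ respectively. Under this isomorphism $\bar n_\a(t)$ corresponds to the $\SL_2$ matrix $\bigl(\begin{smallmatrix} 0 & t\\ -t^{-1} & 0\end{smallmatrix}\bigr)$, so the claimed identity is a routine $2\times 2$ conjugation in $\SL_2(\bbbc)$, combined with $w_\a(\pm\a)=\mp\a$ and $\langle\pm\a,\a\rangle=\pm2$.

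For (ii), the defining relation (St$2'$) of $\St_{R,\dot R,\sigma}(\bbbc)$, namely (\ref{eq:LWA}), already has precisely the desired form when $\lambda=\a$; applying the epimorphism $\pi_{G_{\text{int}}}$ from Proposition~\ref{ProisoimageSt} transfers it to $G_{\text{int}}(\bbbc)$. For $\lambda=-\a$, I would conjugate by $\bar n_\a(t)^{-1}=\bar n_\a(-t)$ (the image under $\chi$ and $\pi_{G_{\text{int}}}$ of relation (\ref{eq:MTorElF02})) or, equivalently, apply (\ref{eq:LWA}) to the symmetric pair obtained by swapping $\a$ and $-\a$.

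For (iii), the strategy is to lift through $\chi$. Since $\sigma$ is commutative and $\Lambda$ is torsion--free, $\bbbc_\sigma$ is a commutative integral domain of characteristic zero, so Lemma~\ref{LemStFrExtRelat}(StF3) applies in $\St_{\dot R}(\bbbc_\sigma)$ and gives
\[
\hat n_{\dot\a}(a)\,\hat x_{\dot\b}(b)\,\hat n_{\dot\a}(a)^{-1}=\hat x_{w_{\dot\a}\dot\b}\bigl(c(\dot\a,\dot\b)\,a^{-\langle\dot\b,\dot\a\rangle}b\bigr).
\]
By Proposition~\ref{ProStIso} the right--inverse $\chi_r^{-1}$ lifts $n_\a(t)$ and $x_\b(u)$ to $\hat n_{\dot\a}(tc_{\delta_\a})$ and $\hat x_{\dot\b}(uc_{\delta_\b})$. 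Substituting $a=tc_{\delta_\a}$, $b=uc_{\delta_\b}$, expanding $(tc_{\delta_\a})^{-\langle\b,\a\rangle}\,c_{\delta_\b}$ in the basis of $\bbbc_\sigma$ by iterated use of $c_\mu c_\nu=\sigma(\mu,\nu)c_{\mu+\nu}$, and then applying $\chi$ and $\pi_{G_{\text{int}}}$ produces the claimed formula. Here I use $\langle\dot\b,\dot\a\rangle=\langle\b,\a\rangle$ (isotropic translates do not affect the form) and the identity
\[
w_{\dot\a}(\dot\b)+\bigl(-\langle\b,\a\rangle\bigr)\delta_\a+\delta_\b=w_\a(\b),
\]
which holds because $w_\a$ fixes the radical $\v^0$ and hence the isotropic part of~$\b$.

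The main obstacle is the cocycle bookkeeping in (iii): rewriting $(c_{\delta_\a})^m$ for $m:=-\langle\b,\a\rangle$ of either sign and repackaging the resulting product of cocycle values into the compact form
\[
\sigma(m\delta_\a,\delta_\b)^{c(\dot\a,\dot\b)}\prod_{k=|c(\dot\a,\dot\b)|}^{|m-c(\dot\a,\dot\b)|}\sigma(k\delta_\a,\delta_\a)^{c(\dot\a,\dot\b)},
\]
in which the sign $c(\dot\a,\dot\b)=\pm1$ uniformly toggles between direct products (for positive powers of $c_{\delta_\a}$) and products of inverse cocycle values (for negative powers), and the absolute--value bounds on $k$ ensure the expression is meaningful for both signs of $m$. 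Once this scalar identification is verified, (iii) follows.
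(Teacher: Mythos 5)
Your proposal is correct and takes essentially the same route as the paper: parts (i)--(ii) reduce to the $\SL(\bbbc)$-picture of Proposition~\ref{ProSL2mod} and to relation St$2'$ pushed through $\pi_{G_{\text{int}}}$ (equivalently the matrix formulas (\ref{eq:ProisoimageSt03})--(\ref{eq:ProisoimageSt04})), while (iii) is exactly the paper's argument of lifting through $\chi$, applying StF3 of Lemma~\ref{LemStFrExtRelat} over $\bbbc_\sigma$, and tracking the cocycle factors before mapping down via $\Pi_{G}$. The only point worth flagging is that the routine $2\times2$ conjugation in (i) actually produces the sign $c(\dot\a,\pm\dot\a)=-1$ which the displayed formula suppresses; this discrepancy comes from the statement as printed, not from your argument.
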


\proof (i) and (ii): Similar to the proof of Corollary~\ref{Cormulttorel}.\

(iii): First note that by {StF3} in Lemma~\ref{LemStFrExtRelat}, we have
\[
\hat{n}_{\dot{\alpha}}{( a)}\hat{x}_{\dot{\beta}}(b)\hat{n}_{\dot{\alpha}}(a)^{-1}=\hat{x}_{w_{\dot\alpha}\dot{\beta}}(c(\dot{\alpha},\dot{\beta})a^{-\langle\dot{\beta},\dot{\alpha}\rangle}b),
\]
hence
\[
\begin{array}{l}
\hat{n}_{\dot{\alpha}}{( tc_{\d_{\a}})}\hat{x}_{\dot{\beta}}(uc_{\d_\b})\hat{n}_{\dot{\alpha}}(t\d_{\a})^{-1}=\\
\hat{x}_{w_{\dot\alpha}\dot{\beta}}\big(c(\dot{\alpha},\dot{\beta})t^{{-\left\langle \b,\a\right\rangle}}u\sg({-\left\langle \b,\a\right\rangle}\d_\a,\d_\b)^{c(\dot{\alpha},\dot{\beta})}\prod_k\sg(k\d_\a,\d_\a)^{c(\dot{\alpha},\dot{\beta})}c_{{-\left\langle \b,\a\right\rangle}\d_{\a}+\d_\b}\big),
\end{array}
\] 
where $k$ ranges from $|c(\dot{\alpha},\dot{\beta})|$ to $|-{\left\langle \b,\a\right\rangle}-c(\dot{\alpha},\dot{\beta})|$.
But this {equation is mapped} to the following via $\Pi^{\rho}_{G}$ as (\ref{eq:ProisoimageSt01}) in Proposition~\ref{ProisoimageSt}:
{\[
\bar{n}_{\a}(t)\bar{x}_{\b}(u)\bar{n}_{\a}(t)^{-1}={\bar{x}_{w_{\a}(\b)}(\lam_{{\alpha},{\beta}}t^{-\left\langle \a,\b\right\rangle}u)}.
\] }
\qed

\begin{rem}{\rm
		Note that in Corollary~\ref{Corliftweylaction} the difference between (i) and (ii) is due to
		the fact that in (i) any non-isotropic root can be embedded into an affine root system
		of rank two (see Proposition~\ref{affine-99}) and on the corresponding  affine Lie subalgebra we have $\sigma\equiv1$ by \cite[Lemma 2.10]{AYY15}.\ There is not such embedding when $\text{rank}(R)=1$.}
\end{rem}

\begin{lem}\label{Lemintmodcorext}
	Let $\ll$ be an EALA over $\bbbc$.\ Then every integrable (respectively, nilpotent) $\ll_c$-module can be extended to an integrable (respectively, nilpotent) $\ll$-module.
\end{lem}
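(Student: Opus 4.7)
The plan is to use the structural decomposition of a tame EALA to define a compatible action of the complement of $\ll_c$. By the structure theory of tame EALAs we have a vector-space decomposition $\ll = \ll_c \oplus \dd$, where $\dd$ is a chosen complement of $\hh \cap \ll_c$ inside the Cartan $\hh$; the subspace $\dd$ is an abelian subalgebra, and each $d \in \dd$ acts on $\ll_c$ by a root-degree derivation, namely $[d,x_\alpha] = \alpha(d)x_\alpha$ for every $\alpha \in R$ and $x_\alpha \in \ll_\alpha$. Extending an $\ll_c$-module $(M,\rho_c)$ to $\ll$ therefore amounts to defining a compatible action of the abelian algebra $\dd$ on $M$, since $\ll_c$ is an ideal and $\ll/\ll_c \cong \dd$ is abelian.

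Concretely, I would use integrability to obtain the weight decomposition $M = \bigoplus_\mu M_\mu$ under $\hh\cap\ll_c$, and then choose, for each weight $\mu$ appearing in $M$, a lift $\tilde\mu \in \hh^\star$ with $\tilde\mu|_{\hh\cap\ll_c} = \mu$, subject to the compatibility $(\tilde{\mu+\bar\alpha}-\tilde\mu)|_{\dd} = \alpha|_\dd$ whenever $\bar\alpha = \alpha|_{\hh\cap\ll_c}$ for some root $\alpha$ for which $e_\alpha \cdot M_\mu \neq 0$. Declaring $d \in \dd$ to act on $M_\mu$ by the scalar $\tilde\mu(d)$ then extends $\rho_c$ to a Lie algebra action of $\ll$: the bracket identity $[d,\rho_c(e_\alpha)] = \alpha(d)\rho_c(e_\alpha)$ follows on each weight space directly from the compatibility of the chosen lifts, and the Jacobi identity is automatic because $\dd$ is abelian. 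Integrability (resp.\ nilpotency) is preserved because the action of the root vectors $\rho_c(e_\alpha)$ is left unchanged, so it remains locally nilpotent, while $\dd$ acts semisimply on each weight space.

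The main obstacle is the consistent construction of the lift $\mu \mapsto \tilde\mu$: several roots $\alpha_1,\alpha_2 \in R$ may share the same restriction to $\hh\cap\ll_c$ while having distinct restrictions to $\dd$ (a phenomenon visible already in the affine case, where any two roots differing by a multiple of the null root restrict identically to the derived Cartan), in which case the naive single lift cannot simultaneously satisfy the compatibility for both $\alpha_i$'s. To handle this I would first enlarge $M$ to a graded module $\widetilde M := M \otimes \bbbc[\Lambda]$, on which the $\Lambda$-grading of $\ll_c$ coming from the isotropic part of roots lifts to an honest $\hh^\star$-grading, choose the lifts on this finer grading where no ambiguity occurs, and then extract the $\ll$-module structure. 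The nilpotent case is handled in the same way, the only change being that ``locally finite'' is replaced throughout by ``locally nilpotent.'' This mirrors the corresponding extension arguments in \cite[Section 3.2]{Kry95} and \cite[Section 6.1]{MP95}, which the paper follows closely.
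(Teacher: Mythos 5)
Your structural premise is where the argument breaks down. Tameness gives only $\ll_c^\perp\subseteq\ll_c$; it does not give a decomposition $\ll=\ll_c\oplus\dd$ with $\dd$ an abelian complement sitting inside $\hh$ and acting by degree derivations. For nullity $\geq 2$ the isotropic root spaces $\ll_\d$, $0\neq\d\in R^0$, need not be contained in the core: a tame EALA may contain (divergence-zero, skew-centroidal type) derivations of nonzero isotropic degree, which lie outside $\hh$, do not act on $\ll_c$ by the scalars $\a(d)$, and do not commute with one another, so $\ll/\ll_c$ is in general a non-abelian algebra of derivations of the core. Your mechanism (scalar action $\tilde\mu(d)$ on $\hh\cap\ll_c$-weight spaces, Jacobi ``automatic by abelianness'') therefore only addresses the subalgebra $\ll_c+\hh$, not $\ll$ itself.

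Moreover, even inside your special case (e.g.\ nullity $1$, where indeed $\ll=\ll_c\oplus\bbbc d$), the first construction is not merely obstructed, it is impossible, so the fallback carries the whole proof and is exactly the part left as a black box. Take $\ll$ affine, $\ll_c=[\ll,\ll]$, and let $M$ be a finite-dimensional evaluation module (pull back the natural $\mathfrak{sl}_2$-module along $x\otimes t^n\mapsto a^n x$, with $c$ acting by $0$); it is integrable over $\ll_c$, but an operator $D$ on $M$ extending $d$ would have to satisfy $[D,\rho_0(x)]=n\,\rho_0(x)$ for every $n$, which is absurd, so no $\ll$-action on the same space exists. Hence ``extract the $\ll$-module structure'' from $\widetilde M=M\otimes\bbbc[\Lam]$ cannot mean recovering an action on $M$ itself; at best you obtain the weaker statement that $M$ is a quotient (or subquotient) of the restriction to $\ll_c$ of an integrable $\ll$-module $\widetilde M$. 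That weaker statement is in fact all that the paper needs in Proposition~\ref{Prokernnilintcon}, and the tensoring trick is in the spirit of the argument the paper simply imports from \cite[Lemma 3.2.32]{Kry95} (its proof here is just that citation), but to make your write-up a proof you must say this explicitly, define the twisted $\ll_c$-action on $M\otimes\bbbc[\Lam]$ via the $\Lam$-grading of $\ll_c$, define the action of the full complement of $\ll_c$ (including the non-degree derivations your decomposition omits), and verify integrability, respectively nilpotency, of $\widetilde M$ over all of $\ll$. As it stands, the proposal asserts the crucial step rather than proving it.
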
 
\proof It immediately follows from \cite[Lemma 3.2.32]{Kry95} and the explanation after its proof on Page 130.\qed

Let $K_{\text{nil}}$ (respectively, $K_{\text{int}}$) be the intersection of the kernel of all
nilpotent (respectively, integrable) representations of an EALA $\ll$ over $\bbbc$.\ Similarly, we define 
$K_{\text{nil,c}}$ and $K_{\text{int,c}}$ for the core $\ll_c$ of $\ll$.
\begin{pro}\label{Prokernnilintcon}
	Let $\ll$ be an EALA with a reduced EARS $R$.\ Then the following hold.
	\begin{itemize}
		\item [(i)]  $K_{\text{nil}}\subseteq K_{\text{nil,c}}$.
		\item [(ii)] $K_{\text{int}}\subseteq K_{\text{int,c}}$.
	\end{itemize}
	Therefore, there exist natural epimorphisms as follows
	\begin{equation}\label{eq:Prokernnilintcon00}
	\Psi_{\text{nil}}:G_{\text{nil}}\to G_{\text{nil,c}}, ~~\Psi_{\text{int}}:G_{\text{int}}\to G_{\text{int,c}}.
	\end{equation}
\end{pro}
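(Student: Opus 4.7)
The plan is to exploit Lemma \ref{Lemintmodcorext} as the sole substantive ingredient: every nilpotent (respectively, integrable) $\ll_c$-module extends to a nilpotent (respectively, integrable) $\ll$-module whose restriction to $\ll_c$ recovers the original. This is exactly what is needed to push vanishing information from $\ll$-representations down to $\ll_c$-representations.

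For (i), I would pick an arbitrary $x\in K_{\text{nil}}$ and test it against an arbitrary nilpotent representation $(\rho,M)$ of $\ll_c$. By Lemma \ref{Lemintmodcorext}, I lift $\rho$ to a nilpotent representation $(\tilde\rho,M)$ of all of $\ll$, with $\tilde\rho|_{\ll_c}=\rho$. Since $x$ lies in the kernel of \emph{every} nilpotent $\ll$-representation, $\tilde\rho(x)=0$, whence $\rho(x)=0$. As $\rho$ is arbitrary, $x\in K_{\text{nil,c}}$. Part (ii) proceeds verbatim, replacing ``nilpotent'' by ``integrable'' throughout and invoking the integrable half of Lemma \ref{Lemintmodcorext}.

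To produce the epimorphisms $\Psi_{\text{nil}}$ and $\Psi_{\text{int}}$, I appeal to the Kac--Peterson construction of $G_{\text{nil}}$, $G_{\text{int}}$ (respectively $G_{\text{nil,c}}$, $G_{\text{int,c}}$) as quotients generated by exponentials of locally nilpotent root vectors from the core, modulo the relations forced by the respective classes of representations of $\ll$ (respectively $\ll_c$). Every relation defining $G_{\text{nil,c}}$ is witnessed by some nilpotent $\ll_c$-module $M$; by the lifting lemma $M$ arises as the restriction of a nilpotent $\ll$-module, and so the corresponding relation is already imposed in $G_{\text{nil}}$. In conjunction with the containment (i) on generators of the core, the universal property of the presentation yields a surjective homomorphism $\Psi_{\text{nil}}\colon G_{\text{nil}}\to G_{\text{nil,c}}$, and $\Psi_{\text{int}}$ is obtained identically from (ii). The only delicate point is the passage between representations of $\ll$ and $\ll_c$, which is entirely absorbed by Lemma \ref{Lemintmodcorext}, so no genuine obstacle remains.
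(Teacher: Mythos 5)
Your identification of Lemma \ref{Lemintmodcorext} as the sole substantive input is exactly the paper's route (its proof is a citation of that lemma together with the proof of Kryliouk's Proposition 3.2.38), and your argument for (i) and (ii) is in essence the intended one. One adjustment is needed for it to typecheck, though: for the containments $K_{\text{nil}}\subseteq K_{\text{nil,c}}$, $K_{\text{int}}\subseteq K_{\text{int,c}}$ to make sense, and for the ``therefore'' clause to have any force, the kernels must be read at the group level of the Kac--Peterson construction. Both $G_{\text{nil}}$ and $G_{\text{nil,c}}$ are quotients of one and the same free group $F$ (freely generated by the exponentials attached to the root spaces $\ll_\a$, $\a\in R^\times$, all of which lie in $\ll_c$), and $K_{\text{nil}}$, $K_{\text{nil,c}}$ are the intersections of the kernels of the homomorphisms $F\to GL(M)$ induced by nilpotent $\ll$-modules, respectively nilpotent $\ll_c$-modules. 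Your phrase ``$\tilde\rho(x)=0$, whence $\rho(x)=0$'' treats $x$ as a Lie-algebra element, for which $\rho(x)$ need not even be defined unless $x\in\ll_c$; run the same argument instead for a word $w\in F$, using that the group homomorphism induced by a nilpotent $\ll_c$-module coincides on $F$ with the one induced by its extension $\tilde\rho$, because the generators of $F$ are exponentials of core elements. With that recasting, your proof of (i) and (ii) is correct and is the same as the paper's.

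The paragraph deriving the epimorphisms, however, contains a genuine misstep. In this construction the relations of $G_{\text{nil,c}}$ are the elements of the \emph{common} kernel, i.e.\ the words killed by every nilpotent $\ll_c$-module, not a union of per-module relations, so ``every relation defining $G_{\text{nil,c}}$ is witnessed by some nilpotent $\ll_c$-module'' misdescribes the construction; worse, the conclusion you draw, that every relation of $G_{\text{nil,c}}$ is already imposed in $G_{\text{nil}}$, amounts to the reverse containment $K_{\text{nil,c}}\subseteq K_{\text{nil}}$, which is not what is needed and would produce a morphism $G_{\text{nil,c}}\to G_{\text{nil}}$ rather than $\Psi_{\text{nil}}$. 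The lifting lemma, as you invoke it, does not yield that reverse containment: extending a nilpotent $\ll_c$-module $M$ to $\tilde M$ only shows that the kernel in $F$ determined by $M$ equals the one determined by $\tilde M$ and hence contains $K_{\text{nil}}$ --- which is (i) again. Fortunately the detour is unnecessary: since $G_{\text{nil}}=F/K_{\text{nil}}$ and $G_{\text{nil,c}}=F/K_{\text{nil,c}}$ are quotients of the same group $F$, the containment (i) alone gives the canonical surjection $F/K_{\text{nil}}\twoheadrightarrow F/K_{\text{nil,c}}$, and (ii) gives $\Psi_{\text{int}}$ in the same way; this is how the cited proof concludes, and no appeal to relations of $G_{\text{nil,c}}$ holding in $G_{\text{nil}}$ is required.
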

\proof Follows from Lemma~\ref{Lemintmodcorext} and the proof of \cite[Proposition 3.2.38]{Kry95}.\qed

Define epimorphisms
\begin{equation}\label{eq:epicornil}
\Psi_{\text{nil},c}:=\Psi_{\text{nil}}\circ\pi_{G_{\text{nil}}}:\St_{R,\dot{R},\sigma}(\bbbc)\to G_{\text{nil,c}}(\bbbc), 
\end{equation}
and
\begin{equation}\label{eq:epicorint}
\Psi_{\text{int},c}:=\Psi_{\text{int}}\circ\pi_{G_{\text{int}}}:\St_{R,\dot{R},\sigma}(\bbbc)\to G_{\text{int,c}}(\bbbc),
\end{equation}
where $\pi_{G_{\text{nil}}}$ and $\pi_{G_{\text{int}}}$ are as in Proposition~\ref{ProisoimageSt}.

\begin{cons}\label{ConsepiUniv}
	Let $\ll$ be an EALA with a reduced EARS $R=R(\dot{R},S,L)$ and let $\bbbc_{\sigma}$ be a coordinate ring.\ 
	Then there exists an epimorphism from $\G_{R,\dot{R},\sigma}(\bbbc)$ onto  $G_{\text{int,c}}$, namely
	\begin{equation}  \label{eq:ConsepiUniv00}
	\bar{\pi}_{ G_{\text{int},c}}:\G_{R,\dot{R},\sigma}(\bbbc)\to G_{\text{int},c}(\bbbc).
	\end{equation}	
	
\end{cons}
\proof Follows from Consequence~\ref{ConsEChevepi} and Proposition~\ref{Prokernnilintcon}.\qed

By the above adjustment, the next proposition is a generalization of \cite[Proposition 3.2.41]{Kry95}.\ Its proof is also similar to the proof
of \cite[Proposition 3.2.41]{Kry95}.		

\begin{pro}\label{Proisonilintc} 
	In the above setting we have
	\[
	G_{\text{nil}}(\bbbc)\cong G_{\text{nil},c}(\bbbc)\cong G_{\text{int}}(\bbbc)\cong G_{\text{int,c}}(\bbbc).
	\]
\end{pro}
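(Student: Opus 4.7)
The plan is to establish the chain of isomorphisms by assembling the epimorphisms already produced in Propositions~\ref{Prokernnilintcon} and~\ref{ProisoimageSt} into a commuting diamond, and then showing that each arrow is invertible by means of Lemma~\ref{Lemintmodcorext} together with the $\SL_2$-description of root subgroups in Proposition~\ref{ProSL2mod}. Throughout, both $G_{\mathrm{nil}}(\bbbc)$ and $G_{\mathrm{nil},c}(\bbbc)$ (respectively $G_{\mathrm{int}}(\bbbc)$ and $G_{\mathrm{int},c}(\bbbc)$) are generated by exponentials of $x\in\ll_\alpha$ for $\alpha\in R^\times$, and these root spaces lie simultaneously in $\ll$ and in $\ll_c$; this common generating set is what makes the comparison maps possible.

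The first step is to prove $G_{\mathrm{nil}}(\bbbc)\cong G_{\mathrm{nil},c}(\bbbc)$ and, in the same manner, $G_{\mathrm{int}}(\bbbc)\cong G_{\mathrm{int},c}(\bbbc)$. The surjections $\Psi_{\mathrm{nil}}$ and $\Psi_{\mathrm{int}}$ are already in hand from Proposition~\ref{Prokernnilintcon}. For injectivity, I would invoke Lemma~\ref{Lemintmodcorext}: every nilpotent (resp.\ integrable) $\ll_c$-module extends to a nilpotent (resp.\ integrable) $\ll$-module. Consequently $K_{\mathrm{nil}}\cap\ll_c=K_{\mathrm{nil},c}$ (and similarly for $K_{\mathrm{int}}$), because any element of $\ll_c$ killed by every nilpotent $\ll_c$-module is, by extension, killed by every nilpotent $\ll$-module. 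This identification forces $\Psi_{\mathrm{nil}}$ and $\Psi_{\mathrm{int}}$ to be injective on the subgroups generated by root exponentials (which exhaust both groups).

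The second step is to identify the nilpotent and integrable versions. Since every nilpotent representation of $\ll$ (resp.\ $\ll_c$) is automatically integrable, there are natural epimorphisms $G_{\mathrm{int}}(\bbbc)\to G_{\mathrm{nil}}(\bbbc)$ and $G_{\mathrm{int},c}(\bbbc)\to G_{\mathrm{nil},c}(\bbbc)$ arising from the inclusion of kernels $K_{\mathrm{int}}\subseteq K_{\mathrm{nil}}$. To invert these, I would use Proposition~\ref{ProSL2mod}(ii): for every non-isotropic root $\alpha$, the subgroup $G_\ast^{\alpha}(\bbbc)$ is the image of $\SL_2(\bbbc)$ under an intrinsically defined map $\varphi^{\alpha}$, and every integrable representation restricts to an integrable $\SL_2$-module on this copy, which is automatically a sum of nilpotent finite-dimensional pieces. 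Combined with the relations \textbf{StF3}--\textbf{StF6} for pairwise interactions (lifted from the finite-dimensional simple subalgebras $\ll_{\alpha,\beta}$ of nilpotent pairs via Lemma~\ref{lemr2srs}), this shows that the relations defining $G_{\mathrm{nil}}$ already hold in $G_{\mathrm{int}}$; one obtains a well-defined inverse homomorphism and hence the isomorphism.

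Finally, I would assemble all four isomorphisms into the square of Proposition~\ref{Prokernnilintcon} and check that it commutes on the generators $\exp(te_\alpha)$, which is automatic from the definitions of $\Psi_{\mathrm{nil}}$, $\Psi_{\mathrm{int}}$, and the maps $G_{\mathrm{int}}\to G_{\mathrm{nil}}$ and $G_{\mathrm{int},c}\to G_{\mathrm{nil},c}$. The main obstacle, in comparison with the simply-laced case of \cite[Proposition 3.2.41]{Kry95}, is the verification that the inverse maps respect the non-trivial commutator relations (\ref{eq:DefSt22}) in non-simply-laced types. This is precisely where the passage through the nilpotent pairs of Definition~\ref{defrevise} and the attendant reduction to the simple Lie algebra $\ll_{\alpha,\beta}=\ll_{R_{\alpha,\beta}}$ (Lemma~\ref{lem99}) becomes essential: it reduces each such relation to a Chevalley-group identity inside a finite-dimensional simple factor, where it is standard. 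Rank-one situations require the separate treatment afforded by Proposition~\ref{ProSL2mod}(i) and the rank-one rules in Corollary~\ref{Corliftweylaction}(ii), exactly as in the model proof.
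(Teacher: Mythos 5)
There is a genuine gap in your second step, the comparison of the nilpotent and integrable groups. The groups $G_{\text{nil}}$ and $G_{\text{int}}$ are defined Kac--Peterson style as quotients of the common free product $F$ of the root groups $\exp(\bbbc x)$, $x\in\ll_\a$, $\a\in R^\times$, by the intersections $K_{\text{nil}}$, $K_{\text{int}}$ of the kernels of the homomorphisms induced by all nilpotent, respectively integrable, representations; they are \emph{not} given by a presentation. Consequently, verifying that the Steinberg-type relations (St1), (St2), (St2$'$), or \textbf{StF3}--\textbf{StF6}, hold in $G_{\text{int}}$ does not produce a homomorphism $G_{\text{nil}}\to G_{\text{int}}$ inverse to the natural surjection: the paper only has an epimorphism $\pi_{G_{\text{nil}}}\colon\St_{R,\dot{R},\sigma}(\bbbc)\to G_{\text{nil}}$ whose kernel is not identified, so $G_{\text{nil}}$ is not known to be presented by those relations (if it were, $\pi_{G_{\text{nil}}}$ would be an isomorphism, which is never claimed). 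What the isomorphism actually requires is the kernel inclusion $K_{\text{nil}}\subseteq K_{\text{int}}$ inside $F$, i.e.\ that every word in the root groups annihilated by all nilpotent representations is annihilated by all integrable ones; your appeal to Proposition~\ref{ProSL2mod}(ii) and the decomposition of integrable $\mathfrak{sl}^{\a}_{2}$-modules only controls single root subgroups (and rank-one words), not arbitrary elements of $K_{\text{nil}}$, so this step is not closed. The paper itself does not supply this argument either: its proof consists of the remark that the statement and its proof are ``similar to'' \cite[Proposition 3.2.41]{Kry95}, so the substantive content is delegated to Kryliouk's thesis rather than to the ingredients you cite.

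A secondary point in your first step: the extension Lemma~\ref{Lemintmodcorext} is what makes $\Psi_{\text{nil}}$, $\Psi_{\text{int}}$ well defined (it gives $K_{\text{nil}}\subseteq K_{\text{nil,c}}$, which is exactly Proposition~\ref{Prokernnilintcon}); injectivity goes the other way and follows instead from \emph{restriction}: any nilpotent (integrable) $\ll$-module restricts to a nilpotent (integrable) $\ll_c$-module inducing the same homomorphism on $F$, whence $K_{\text{nil,c}}\subseteq K_{\text{nil}}$ and the two kernels coincide. Your formulation ``$K_{\text{nil}}\cap\ll_c=K_{\text{nil,c}}$'' works at the level of Lie-algebra kernels, but faithfulness statements about the Lie algebra do not transfer to the group-level kernels that actually define $G_{\text{nil}}$ and $G_{\text{nil,c}}$; the correct argument must be run with the normal subgroups of $F$. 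This part is repairable with the tools at hand, unlike the nilpotent-versus-integrable step.
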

In the light of Proposition~\ref{Proisonilintc}, from now on we denote any of
the above isomorphic groups simply by $\bar{G}$.\ Also we denote the group corresponding 
to the adjoint representation by $\text{Ad}(G)$.\ Similarly, $\text{Ad}(N)$ and  $\text{Ad}(T)$ denote the groups corresponding to $\bar{N}$ and $\bar{T}$ respectively.\
It is evident from Consequence~\ref{Consnormalingr} that $\text{Ad}(T)\unlhd\text{Ad}(N)$.\ Moreover, elements $\theta_{\alpha}(t):=\text{Ad}(\bar{n}_{\alpha}(t))$ for $\alpha\in R^{\times}$ and
$t\in\bbbc$ are of the following form:
\begin{equation}\label{eq:dwe}
\theta_{\alpha}(t):=\exp\text{ad}te_{\alpha}\exp\text{ad}-t^{-1}f_{\alpha}\exp\text{ad}te_{\alpha},
\end{equation}
where $\mathfrak{sl}^{\a}_{2}:= \ll_{\alpha}+[\ll_{\a},\ll_{-\a}]+\ll_{-\alpha}$ is a copy of $\mathfrak{sl}_{2}$ with a standard basis $\{e_{\alpha},h_{\alpha},f_{\alpha}\}$.\
For a non-isotropic root $\alpha\in R^{\times}$ and an isotropic root $\sg=\sum_{i=1}^{\nu}m_{i}\sg_{i}\in R^{0}$ ($\text{null}(R)=\nu$) let $c_{(\a,\sg)}$ be as in (\ref{b4}).
\begin{thm}\label{TheWeyliso}
	Let $\ll$ be an EALA over $\bbbc$ with a reduced EARS $R=R(\dot{R},S,L)$.\ Let $\w$ be the Weyl group associated to $\ll$.\ Then 
	\begin{equation}
	\w\stackrel{\Phi}{\cong}\text{Ad}(N)/\text{Ad}(T),
	\end{equation}
	where $\Phi$ sends $w_{\alpha}$ to  $\text{Ad}(T)\text{Ad}(\bar{n}_{\alpha}(t))$ for all
	$\alpha\in R^{\times}$ and $t\in\bbbc^{\ast}$.
\end{thm}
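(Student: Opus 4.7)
The plan is to construct mutually inverse homomorphisms $\Phi:\w\to\text{Ad}(N)/\text{Ad}(T)$ and $\bar\eta:\text{Ad}(N)/\text{Ad}(T)\to\w$. The map $\bar\eta$ comes from the $\text{Ad}$-action of $\bar N$ on the Cartan subalgebra $\hh$. By Proposition~\ref{ProSL2mod}, each $\bar h_\alpha(t)$ is the image under $\varphi^\alpha$ of a diagonal matrix of $\SL$, hence lies in a maximal torus and so $\text{Ad}(\bar h_\alpha(t))$ acts trivially on $\hh$; meanwhile the standard $\SL$-computation shows that $\theta_\alpha(t)=\text{Ad}(\bar n_\alpha(t))$ restricts to the reflection $w_\alpha$ on $\hh$ (and hence on $\hh^\star$ via the form). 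Thus the $\hh$-action factors through $\text{Ad}(N)/\text{Ad}(T)$ and lands surjectively in $\w$, defining $\bar\eta$ with $\bar\eta(\text{Ad}(T)\theta_\alpha(t))=w_\alpha$.

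\textbf{Building $\Phi$ from the presentation by conjugation.} I would invoke Theorem~\ref{azam2000} and reduce the construction of $\Phi$ to checking that the cosets $\text{Ad}(T)\theta_\alpha(t)$ (independent of $t$) satisfy the three defining relations of $\hat\w$. Relation~(i) reduces to the $\SL$-identity $n_\alpha(t)^2=h_\alpha(-1)$, which gives $\theta_\alpha(t)^2\in\text{Ad}(T)$. Relation~(ii) follows from Corollary~\ref{Corliftweylaction}(iii), obtained by transferring StF4 through $\chi$ and $\pi_{G_{\text{int}}}$: the formula $\bar n_\alpha(t)\bar n_\beta(s)\bar n_\alpha(t)^{-1}=\bar n_{w_\alpha(\beta)}(u)$ for a suitable $u\in\bbbc^\ast$ passes mod $\text{Ad}(T)$ to the desired braid relation.

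\textbf{Main obstacle: the reduced-collection relation (iii).} The hard part will be to show $\prod_{p=1}^m \bar c_{(\alpha_p,\eta_p)}\in\text{Ad}(T)$ for every reduced collection $\{(\epsilon_p,\alpha_p,\eta_p)\}_{p=1}^m$. My approach is to first observe, via Proposition~\ref{affine-99}, that each elementary factor $\bar n_{\alpha+\tau}(s)\bar n_\alpha(t)^{-1}$ appearing in the expansion of $c_{(\alpha,\sigma)}$ lives inside the subgroup associated to a rank-one affine subsystem $R_{R_{\alpha,\beta},\delta}$, on which the $2$-cocycle trivializes by \cite[Lemma~2.10]{AYY15}. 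Within such an affine subgroup these elementary elements commute modulo $\bar T$ and generate a free abelian group indexed by the translation lattice; a careful expansion of $\prod \bar c_{(\alpha_p,\eta_p)}$, using Corollary~\ref{Corliftweylaction}(iii) to move roots past the $\bar n$'s, then reduces the assertion to a single linear identity in that lattice, and this identity is precisely the numerical condition $\sum_p k(\alpha_p)\epsilon_p m_{ip}m_{jp}=0$ defining a reduced collection.

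\textbf{Conclusion.} Once (iii) is verified, Theorem~\ref{azam2000} yields that $\Phi$ is a well-defined group homomorphism. It is surjective because $\text{Ad}(N)$ is generated by the $\theta_\alpha(t)$; and since $\bar\eta\circ\Phi=\text{id}_\w$ on the generating reflections, $\Phi$ is injective. Hence $\Phi$ and $\bar\eta$ are mutually inverse isomorphisms.
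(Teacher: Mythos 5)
Your overall strategy coincides with the paper's: both arguments construct $\Phi$ by checking the generalized presentation by conjugation of Theorem~\ref{azam2000} on the cosets $\text{Ad}(T)\theta_{\alpha}(t)$, dispose of relations (i) and (ii) via the Steinberg relations (Lemma~\ref{LemStFrExtRelat}, Corollary~\ref{Corliftweylaction}), and get injectivity from the action on $\hh$, where $\theta_{\alpha}(t)$ restricts to $w_{\alpha}$ and $\text{Ad}(T)$ restricts to the identity (the paper quotes \cite[(1.26)]{AABGP97} for this, which also covers rank one and nullity $\geq2$, where $\varphi^{\alpha}$ is not available). The genuine gap is in your treatment of relation (iii), which you correctly single out as the main obstacle but do not actually prove. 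The pivotal claim that the elementary factors $\bar{n}_{\alpha+\tau}(s)\bar{n}_{\alpha}(t)^{-1}$ ``commute modulo $\bar{T}$ and generate a free abelian group indexed by the translation lattice'' is unjustified, and for nullity $\geq2$ it is incompatible with the very theorem you are proving: modulo $\text{Ad}(T)$ these elements must realize the elements $w_{\alpha+\tau}w_{\alpha}$ of $\w$, and in an extended affine Weyl group of nullity $\geq2$ such translation-type elements along different isotropic directions do not commute --- their commutators are exactly the nontrivial central elements recorded by the $c_{(\alpha,\sigma)}$. Concretely, for $\sigma=m_{1}\sigma_{1}+m_{2}\sigma_{2}$ one computes that $c_{(\alpha,\sigma)}$ sends $\lambda\in(\v^{0})^{\star}$ to $\lambda+m_{1}m_{2}\bigl(\lambda(\sigma_{1})\sigma_{2}-\lambda(\sigma_{2})\sigma_{1}\bigr)$, so $c_{(\alpha,\sigma)}\neq1$ in $\w$ whenever $m_{1}m_{2}\neq0$. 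If your free-abelian claim held, every $\bar{c}_{(\alpha,\sigma)}$ would be trivial in $\text{Ad}(N)/\text{Ad}(T)$, and your own identity $\bar{\eta}\circ\Phi=\mathrm{id}_{\w}$ would then force $c_{(\alpha,\sigma)}=1$ in $\w$, a contradiction. Two further points in the same step do not stand: the reducedness condition $\sum_{p}k(\alpha_{p})\epsilon_{p}m_{ip}m_{jp}=0$ is quadratic in the $m$'s and indexed by pairs $i<j$ (it encodes precisely those central commutators), so it cannot emerge as ``a single linear identity'' in a translation lattice; and the cocycle trivialization of \cite[Lemma 2.10]{AYY15} is available only inside a nullity-one subalgebra, i.e.\ a single isotropic direction, whereas $c_{(\alpha,\sigma)}$ mixes all the $\sigma_{i}$ (also $R_{R_{\alpha,\beta},\delta}$ from Proposition~\ref{affine-99} has rank two, not one). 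So the key verification is missing.

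For comparison, the paper handles (iii) by a different and purely internal computation in $\St_{\dot{R}}(\bbbc_{\sigma})$: since all reflections occurring in $c_{(\alpha,\delta)}$ involve roots with the same finite part $\dot{\alpha}$, each $\tilde{c}_{(\alpha,\delta)}$ is rewritten, via $\hat{n}_{\dot{\alpha}}(a)\hat{n}_{\dot{\alpha}}(b)=\hat{h}_{\dot{\alpha}}(a)\hat{h}_{\dot{\alpha}}(-b)^{-1}$, as a product of the elements $\hat{h}_{\dot{\alpha}}(c_{\mu})\in\hat{T}$, and the paper then asserts that the image of this product lies in $\text{Ad}(T)$, so that (iii) holds for an arbitrary collection; the numerical reduced-collection condition is never invoked there. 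Be aware that this membership assertion is itself delicate: $\text{Ad}\,\Pi_{G}(\hat{h}_{\dot{\alpha}}(c_{\mu}))=\theta_{\dot{\alpha}+\mu}(1)\theta_{\dot{\alpha}}(1)^{-1}$ acts on $\hh$ as $w_{\dot{\alpha}+\mu}w_{\dot{\alpha}}\neq\mathrm{id}$ for $\mu\neq0$, whereas every element of $\text{Ad}(T)$ acts trivially on $\hh$, so one cannot simply argue that toral elements of $\St_{\dot{R}}(\bbbc_{\sigma})$ map into $\text{Ad}(T)$. In short, your instinct that relation (iii) is the crux and that reducedness should enter is sound, but the argument you sketch erases exactly the central (Heisenberg-type) part of $\w$ that makes this step nontrivial, and therefore does not establish the theorem.
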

\proof When $\dot{R}=A_{n}$ (for $n\geq2$) this is \cite[Proposition 3.3.1]{Kry95}.\ We now consider all other {possible cases.}\
Similar to the proof of \cite[Proposition 3.3.1]{Kry95}, it is clear
that $\Phi$ does not depend on the choice of $t\in\bbbc^{\ast}$.\
First we show that $\Phi$ defines an epimorphism.\ For this, note that by {Theorem \ref{azam2000}}, in view of Proposition~\ref{ProisoimageSt}, it is enough to show that 
elements of the form $\tilde{w}_{\alpha}:=\text{Ad}(T)\text{Ad}(\bar{n}_{\alpha}(t))$ satisfy	the generalized presentation by conjugation relations for $\w$.\ Recall from {Proposition~\ref{ProisoimageSt}} that elements $\hat{n}_{\dot{\alpha}}(c_{\delta_{\alpha}})$ in $\St_{\dot{R}}(\bbbc_{\sigma})$ correspond to $\text{Ad}(\bar{n}_{\alpha}(1))$ where $\alpha=\dot{\alpha}+\delta_{\alpha}\in R^{\times}$.\ Since both
$\text{Ad}(T)\text{Ad}(\bar{n}_{\alpha}(1))$ and $\text{Ad}(T)\text{Ad}(\bar{n}_{\alpha}(-1))$ are in the class $\tilde{w}_{\alpha}$
by (\ref{eq:MTorElF02}), it is clear that $\tilde{w}_{\alpha}^{2}=1$.\ Hence relation
(i) in {Theorem \ref{azam2000} holds.}\

Let $\text{Rank}(R)\geq2$.\ In this case, for any $\beta=\dot{\beta}+\delta_{\beta}$, by {Lemma~\ref{LemStFrExtRelat}({StF4})}, we have
\begin{equation}\label{eq:TheWeyliso00}
\hat{n}_{\dot{\alpha}}(c_{\delta_{\alpha}})\hat{n}_{\dot{\beta}}(c_{\delta_{\beta}})\hat{n}_{\dot{\alpha}}(c_{\delta_{\alpha}})^{-1}=\hat{n}_{w_{\dot{\alpha}}\dot{\beta}}(\kappa(\alpha,\beta)c_{\delta_{\alpha}}^{-(\dot{\beta},\dot{\alpha})}c_{\delta_{\beta}})\stackrel{\chi}{=}n_{w_{\alpha}\beta}(\kappa(\alpha,\beta)),
\end{equation}
for some $\kappa(\alpha,\beta)\in\bbbc^{\ast}$.\ Therefore, $\tilde{w}_{\alpha}\tilde{w}_{\beta}\tilde{w}_{\alpha}=\tilde{w}_{w_{\alpha}(\beta)}$ and
hence relation (ii) in {Theorem \ref{azam2000} also holds.}\ Moreover, when $\text{Rank}(R)=1$ by (\ref{eq:ProisoimageSt03}) and (\ref{eq:ProisoimageSt04})
and a straightforward matrix calculation one can easily see that relation (ii) also holds in this case.\

Now consider $c_{(\alpha,\delta)}$ as in (\ref{b4}), we have:
\begin{equation}\label{eq:TheWeyliso01}
\tilde{c}_{(\alpha,\delta)}:=(\hat{n}_{\dot{\alpha}}(c_{\delta_{\alpha}+\delta})\hat{n}_{\dot{\alpha}}(c_{\delta_{\alpha}}))(\hat{n}_{\dot{\alpha}}(c_{\delta_{\alpha}})\hat{n}_{\dot{\alpha}}(c_{\delta_{\alpha}+\delta_{1}}))^{m_{1}}\cdots(\hat{n}_{\dot{\alpha}}(c_{\delta_{\alpha}})\hat{n}_{\dot{\alpha}}(c_{\delta_{\alpha}+\delta_{\nu}}))^{m_{\nu}},
\end{equation}
which is equal to 
\begin{equation}\label{eq:TheWeyliso02}
(\hat{h}_{\dot{\alpha}}(c_{\delta_{\alpha}+\delta})\hat{h}_{\dot{\alpha}}(-c_{\delta})^{-1})(\hat{h}_{\dot{\alpha}}(c_{\delta_{1}})\hat{h}_{\dot{\alpha}}(-c_{\delta_{\alpha}+\delta})^{-1})^{m_{1}}\cdots(\hat{h}_{\dot{\alpha}}(c_{\delta_{\nu}})\hat{h}_{\dot{\alpha}}(-c_{\delta_{\alpha}+\delta_{\nu}})^{-1})^{m_{\nu}},
\end{equation}
whose image under $\Pi_{G}\circ\text{Ad}$ ($\Pi_{G}$ as in Proposition~\ref{ProisoimageSt}) is in $\text{Ad}(T)$, so  is the corresponding
element to $\prod_{p=1}^{n}\tilde{c}^{\epsilon_p}_{(\alpha_p,\delta_p)}$ for any
collection $\{(\epsilon_p,\alpha_p,\delta_p)\}^{n}_{p=1}$.\ Hence
relation (iii) in {Theorem \ref{azam2000}} holds as well.\

At last, by \cite[(1.26)]{AABGP97} we know that the restriction
of the action of $\theta_{\alpha}(t)$ (for all $\alpha\in R^{\times}$ and $t\in\bbbc^{\ast}$)
on the Cartan subalgebra $\hh$ of $\ll$ is the same as the action of the Weyl group
element $w_{\alpha}$, so the theorem follows.\qed

\noindent{\bf Acknowledgements.}
The second-named author would like to thank Ralf K{\"o}hl for many useful discussions on the preliminary version of this article during a visit to Gie\ss en university.


\bibliographystyle{siam}     

\end{document}